\newcommand{\si}[1]{#1}
\newcommand{\jo}[1]{}
\newtheorem{theorem}{Theorem}[section]
\newtheorem{proposition}{Proposition}[section]
\newtheorem{lemma}{Lemma}[section]
\newtheorem{definition}{Definition}[section]
\newtheorem{example}{Example}[section]
\newtheorem{remark}{Remark}[section]
\renewcommand{\S}{\mathbb{S}}					 
\newcommand{\seq}[1]{\{#1^k\}_{k\in \N}}		 
\renewcommand{\bar}{\overline}                   
\newcommand{\I}{\mathbb{I}}                      
\newcommand{\lin}{\mathrm{lin}}                  
\newcommand{\Ker}{\mathrm{Ker}\hspace{0.05cm}}   
\renewcommand{\Im}{\mathrm{Im}\hspace{0.05cm}}   
\newcommand{\tr}{\mathrm{trace}}                 
\newcommand{\Diag}{\mathrm{Diag}}                
\renewcommand{\int}{\mathrm{int}\hspace{0.05cm}} 
\newcommand{\rank}{\mathrm{rank}}                
\newcommand{\F}{\mathcal{F}}                     
\newcommand{\norm}[1]{\|#1\|}                    
\newcommand{\enorm}[1]{\|#1\|_2}                 
\newcommand{\projs}{\Pi_{\mathbb{S}^m_+}}        
\newcommand{\R}{\mathbb{R}}  					 
\newcommand{\T}{\top\hspace{-1pt}}               
\newcommand{\N}{\mathbb{N}}                      
\newcommand{\xb}{\bar{x}}                        
\newcommand{\Yb}{\bar{Y}}                        
\newcommand{\sym}{\mathbb{S}^m}                  
\newcommand{\corr}{${\textnormal{\Letter}}$}
\begin{document}


\title{Weak notions of nondegeneracy in nonlinear semidefinite programming
 \footnotetext{The authors received financial support from
 CEPID - FAPESP (grant 
 2013/07375-0),  
FAPESP (grants 
2018/24293-0, 
2017/18308-2, and 
2017/17840-2), 
CNPq (grants 
301888/2017-5, 
303427/2018-3, and 
404656/2018-8), 
PRONEX - CNPq/FAPERJ (grant 
E-26/010.001247/2016), and 
FONDECYT grant 
1201982 and 
Centro de Modelamiento Matem{\'a}tico (CMM), ACE210010 and FB210005, BASAL funds for center of excellence, all from ANID-Chile.}}

\jo{\titlerunning{Weak notions of nondegeneracy in nonlinear SDP}}

\si{
\author{
Roberto Andreani \thanks{Department of Applied Mathematics, State University of Campinas, Campinas, SP, Brazil. 
	Email: {\tt andreani@unicamp.br}}
\and
Gabriel Haeser \thanks{Department of Applied Mathematics, University of S{\~a}o Paulo, S{\~a}o Paulo, SP, Brazil. 
	Emails: {\tt ghaeser@ime.usp.br, leokoto@ime.usp.br}}	
\and 
Leonardo M. Mito \footnotemark[2]
\and
H{\'e}ctor Ram{\'i}rez \thanks{Departamento de Ingenier{\'i}a Matem{\'a}tica and Centro de Modelamiento Matem{\'a}tico (AFB170001 - CNRS UMI 2807), Universidad de Chile, Santiago, Chile.
Email: {\tt hramirez@dim.uchile.cl}}
}
}

\jo{
\author{
Roberto Andreani
\and
Gabriel Haeser
\and 
Leonardo M. Mito
\and
H{\'e}ctor Ram{\'i}rez
}

\authorrunning{R. Andreani, G. Haeser, L. M. Mito, and H. Ram{\'i}rez} 

\institute{ 
    Roberto Andreani \at
    Department of Applied Mathematics, 
    State University of Campinas, 
    S{\~ao} Paulo, Brazil. \\
	\email{andreani@unicamp.br}
    \and
    Gabriel Haeser \at
    Department of Applied Mathematics, 
    University of S\~ao Paulo, 
    S{\~a}o Paulo, Brazil.\\
    \email{ghaeser@ime.usp.br} 
    \and
    Leonardo M. Mito \corr 
    \at
    Department of Applied Mathematics, 
    University of S{\~a}o Paulo, 
    S{\~a}o Paulo, Brazil.\\
	\email{leokoto@ime.usp.br}
	\and	
	H{\'e}ctor Ram{\'i}rez \at
	Department of Mathematical Engineering and Center for Mathematical Modeling (CNRS UMI 2807),
	University of Chile, 
	Santiago, Chile.\\
	\email{hramirez@dim.uchile.cl}
  }
}

\jo{\date{Received: date / Accepted: date}}

\maketitle

\abstract{The constraint nondegeneracy condition is one of the most relevant and useful constraint qualifications in nonlinear semidefinite programming. It can be characterized in terms of any fixed orthonormal basis of the, let us say, $\ell$-dimensional kernel of the constraint matrix, by the linear independence of a set of $\ell(\ell+1)/2$ derivative vectors. We show that this linear independence requirement can be equivalently formulated in a smaller set, of $\ell$ derivative vectors, by considering all orthonormal bases of the kernel instead. This allows us to identify that not all bases are relevant for a constraint qualification to be defined, giving rise to a strictly weaker variant of nondegeneracy related to the global convergence of an external penalty method. We use some of these ideas to revisit an approach of Forsgren [Math. Prog. 88, 105--128, 2000] for exploiting the sparsity structure of a transformation of the constraints to define a constraint qualification, which led us to develop another relaxed notion of nondegeneracy using a simpler transformation. If the zeros of the derivatives of the constraint function at a given point are considered, instead of the zeros of the function itself in a neighborhood of that point, we obtain an even weaker constraint qualification that connects Forsgren's condition and ours. 

\si{
\

\textbf{Keywords:} Semidefinite programming, Constraint qualifications, Constraint nondegeneracy.}
}

\jo{\keywords{Semidefinite programming \and Constraint qualifications \and Constraint nondegeneracy}

\subclass{ 90C46 \and 90C30 \and 90C26 \and 90C22}
}


\section{Introduction}\label{sec:intro}

The study of \textit{linear} and \textit{nonlinear} \textit{semidefinite programming} (for short, SDP and NSDP, respectively) problems has been consistently growing over the last decades. There are several models for real world problems that can be reformulated as SDPs or NSDPs (we refer to the handbooks~\cite[Part 4]{handbookconic} and~\cite[Part 3]{handbookSDP} for a vast collection of applications), which motivate and are motivated by the development of theoretical results regarding optimality conditions and \textit{constraint qualifications} (CQs) for (N)SDPs. Loosely speaking, CQs are assumptions over the feasible set of an optimization problem that ensure that it can be locally described in terms of its first-order approximation. This leads to the possibility of characterizing all solutions of an (N)SDP problem in terms of the derivatives of the functions that describe it, which gives CQs a pivotal role in building convergence theories for practical algorithms. The standard way to do this is to prove that every feasible limit point of the output sequence of the algorithm satisfies the Karush-Kuhn-Tucker (KKT) conditions under a given CQ. Thus, employing a weaker CQ leads to a more robust convergence theory.

One of the most relevant CQs in the literature of (N)SDP is the so-called \textit{nondegeneracy}  (or \textit{transversality)}  \textit{condition}, introduced by Shapiro and Fan in \cite[Sec. 2]{shapfan} in the context of eigenvalue optimization, and later reformulated by Shapiro~\cite[Def. 4]{Shapiro1997} for general NSDPs. This condition has been widely used for characterizing sensitivity results (see, for instance, \cite{Fusek,KK13,MordNghiaRock14,MOR15,Mordukhovich2015a,Sun2006}), and also for proving global convergence and the rate of convergence of numerical algorithms (we refer to Yamashita and Yabe~\cite[Secs. 3, 4, and 5]{yamashitasdpreview} for a survey on this topic). However, it is known that even in the linear case, the solutions of large scale SDP problems tend to be degenerate, even though nondegeneracy is expected to hold in a generic sense. Besides, when the constraint of an NSDP problem has some sparsity structure near one of its solutions -- for instance, a diagonal structure -- then nondegeneracy is not satisfied at that solution~\cite{Shapiro1997}. This means that the convergence theory of an algorithm supported by nondegeneracy does not cover such points. 

The explanation for such kind of issue, in our opinion, is the degree of generality of the nondegeneracy condition. That is, although it was born in NSDP, nondegeneracy does not capture any particularity of the constraints, being straigthforwardly extended for any general conic optimization problem, as long as the cone is closed and convex. However, embedding specific traits of matrix-valued functions into nondegeneracy may be more or less direct, depending on how it is characterized. For example, it is well known that (block-)diagonal problems can be remodelled as multiple potentially dense constraints, such that the nondegeneracy condition, when applied to this remodelled problem, may hold. But what about other types of sparsity? While this question has once been addressed by Forsgren~\cite{Forsgren2000}, his approach is somewhat intricate and it was not the main topic of his paper, leaving room for a more dedicated analysis. In this paper, instead of defining nondegeneracy as the transversality of two particular subspaces -- which is the most usual definition -- we exploit an equivalent characterization by Shapiro~\cite[Prop. 6]{Shapiro1997}, which is phrased in terms of the gradients of the entries of an isolated ``active block'' of the constraints. One particularly interesting detail about this characterization is that it treats all representations of such an ``active block'' equally, but we show that some of them are more meaningful than others.

The contributions of this paper revolve around the following results:
\begin{itemize}
	\item We provide a new characterization of nondegeneracy that induces a weaker variant of it, here called \textit{weak-nondegeneracy}, which uses information of the eigenvectors of the constraints evaluated at nearby points;
   \item We incorporate a sparsity treatment in \cite[Prop. 6]{Shapiro1997}, which leads to another weak variant of nondegeneracy, called \textit{sparse-nondegeneracy}.
   \item We connect sparse-nondegeneracy with Forsgren's CQ by means of replacing, in both conditions, the strucutural zeros of the constraint function in a neighborhood of a point, with the zeros of the gradients of its entries at such point. This new condition happens to be a constraint qualification also, which we call \textit{gradient sparse-nondegeneracy}.
\end{itemize}
   These conditions are designed with the sole goal of assisting in proving global convergence of algorithms by means of sequential optimality conditions \cite{ahm10,ahv}; however, we envision that they may be further employed in sensitivity analysis, second-order analysis, among other applications.
   All variants of nondegeneracy we present are proved to be constraint qualifications strictly weaker than nondegeneracy. We also show that when our conditions are applied to diagonal matrices, they are reduced to the \textit{linear independence constraint qualification} (LICQ) from nonlinear programming (NLP). More generally, the conditions are invariant to block representations of (N)SDP problems as a single semidefinite block diagonal matrix or as multiple semidefinite constraints. Then, we compare our definitions with other CQs from the literature.
   
  	This paper is structured as follows: In Section~\ref{sec:common}, we introduce our notation; in Section~\ref{sec:ndg} we recall the nondegeneracy condition and we prove a new characterization of it, which is where the definition of weak-nondegeneracy comes from. In Section~\ref{sec:sndg}, we present our definition of sparse-nondegeneracy and a relaxation of it with distinct properties.  Finally, in Section~\ref{sec:conc}, we discuss some possibilities for prospective work.

\section{Preliminaries}\label{sec:common}

Let $f\colon\R^{n}\to\R$ and $G \colon\R^n \to \sym $ be continuously differentiable functions, where $\sym $ is the linear space of all $m\times m$ symmetric matrices, and let $\sym_+$ be the closed convex pointed cone of all $m\times m$ positive semidefinite matrices. The problem of interest in this paper is the following:
%
\begin{equation}
  \tag{NSDP}
  \begin{aligned}
    & \underset{x \in \mathbb{R}^{n}}{\text{Minimize}}
    & & f(x), \\
    & \text{subject to}
    & & G (x) \succeq 0,
  \end{aligned}
  \label{NSDP}
\end{equation}
where $\succeq$ is the partial order induced by $\sym_+$, characterized by the relation: $M\succeq N$ $\Leftrightarrow$ $M-N\in \sym_+$, for all $M,N\in \sym$. It is worth pointing out that all results in this paper can be straightforwardly extended to NSDP problems with separate equality constraints, but we omit them for simplicity. The feasible set of \eqref{NSDP} will be denoted by $\F\doteq G^{-1}(\sym_+)$. It is well-known that $ \sym$ is an Euclidean space when equipped with the (\emph{Frobenius}) inner product $\langle M,N\rangle\doteq \tr(MN)\doteq \sum_{i,j=1}^m M_{ij} N_{ij}$.

The derivative of $G$ at a point $x\in \R^n$ is the linear mapping $DG(x)\colon\R^n\to \sym$ that can be described (in the canonical basis of $\R^n$) by the action
\[
	d\mapsto DG(x)[d]\doteq \sum_{i=1}^n D_{x_i}G(x)d_i
\]
for all $d=(d_1,\dots,d_n)\in \R^n$, where $D_{x_i}G(x)\in\sym$ is the partial derivative of $G$ with respect to the variable $x_i$ at $x=(x_1,\dots,x_n)\in\R^n$. Also, for each fixed $x$, the \emph{adjoint} of $DG(x)$  is the unique linear mapping $DG(x)^*\colon \sym\to \R^n$ that satisfies $\langle DG(x)[d],M \rangle=\langle d,DG(x)^*[M]\rangle,$ for all $(d,M)\in \R^n\times \sym$. Hence,
\[
	DG(x)^*[M]=
	\begin{bmatrix}
	\langle D_{x_1}G(x),M \rangle\\
	\vdots\\
	\langle D_{x_n}G(x),M \rangle
	\end{bmatrix}
	=
	\sum_{i,j=1}^m M_{ij}\nabla G_{ij}(x)
\]
for all $M\in \sym$, where $\nabla G_{ij}(x)$ denotes the \emph{gradient} of the $(i,j)$-th entry of $G$ as a function of $x$. Similarly, we shall denote the gradient of any real-valued function $F\colon \R^n\to \R$ at a point $x\in \R^n$ by $\nabla F(x)$.

For any given $M\in \sym$, we consider its spectral decomposition in the form
\begin{equation*}\label{specdecomp}
	\begin{aligned}
		M =\sum_{i=1}^m \lambda_i(M) u_i(M) u_i(M)^\T,
	\end{aligned}
\end{equation*}
where $\lambda_i(M)\in \R$ denotes the $i$-th eigenvalue of $M$ arranged in non-increasing order (that is, $\lambda_1(M)\geqslant \lambda_2(M)\geqslant \ldots\geqslant \lambda_m(M)$), and $u_i(M)\in \R^m$ corresponds to any associated eigenvector such that \si{the set} $\left\{u_i(M)\colon i\in \{1,\ldots,m\}\right\}$ is an orthonormal basis of $\R^m$ (that is, $u_i(M)^T u_i(M)=1$ and $u_i(M)^T u_j(M)=0$ when $i\neq j$, for all $i,j\in \{1,\ldots,m\}$).

A useful fact for our analyses is that the orthogonal projection of $M$ onto $\sym_+$ with respect to the induced (Frobenius) norm, denoted by $\projs(M)$, can be characterized in terms of its spectral decomposition as follows:
\[
	\projs(M)=\sum_{i=1}^m [\lambda_i(M)]_+ u_i(M) u_i(M)^\T,
\]
where $[\lambda]_+\doteq \max\{0,\lambda\}$ for all $\lambda\in \R$.

Given any $\bar{x}\in \F$ and any orthogonal matrix $\bar{U}\in \R^{m\times m}$ whose columns are eigenvectors of $G(\xb)$, we partition $\bar{U}=[\bar{P},\bar{E}]$ such that the columns of $\bar{P}\in \R^{m\times r}$ correspond to the eigenvectors associated with the positive eigenvalues of $G(\xb)$ and the columns of $\bar{E}\in \R^{m\times m-r}$ correspond to the eigenvectors associated with the null eigenvalues of $G(\xb)$, where $r=\rank(G(\xb))$. To abbreviate, as an abuse of notation and language, we will say that $\bar{E}$ \textit{spans} $\Ker G(\xb)$ in this context. That is, $\bar{E}$ spans $\Ker G(\xb)$ if, and only if, $\bar{E}^\T\bar{E}=\I_{m-r}$ and $G(\xb)\bar{E}=0$, where $\I_{m-r}$ denotes an $(m-r)$-dimensional identity matrix.

There are multiple ways of describing optimality in NSDP problems, but in this paper we direct our attention to necessary optimality conditions that are based on the classical \emph{Karush-Kuhn-Tucker} (KKT) conditions:

\begin{definition}[KKT]
	We say that a point $\xb\in \F$ satisfies the \emph{KKT conditions} when there exists some $\Yb\succeq 0$ such that
	\begin{equation}\label{def:kkt}
		\begin{aligned}
			\nabla_x L(\xb,\Yb)\doteq \nabla f(\xb)- D G(\xb)^*[\Yb]= 0,\\
			\langle G(\xb),\Yb\rangle = 0,
		\end{aligned}
		\tag{KKT}
	\end{equation}
	where $L:\R^n \times \sym  \to \R$ is the \emph{Lagrangian function} of \eqref{NSDP}, given by
	\begin{equation*}\label{eq:Lag_P}
		L(x,Y)\doteq f(x) - \langle G(x), Y\rangle.
	\end{equation*}
\end{definition}
As usual, the matrix $\Yb$ is called a \emph{Lagrange
multiplier} associated with  $\xb$ and we denote the set of all Lagrange multipliers associated with $\xb$ by
$\Lambda(\xb)$. When $\Lambda(\xb)\neq \emptyset$, $\xb$ is called a \emph{KKT point} of \eqref{NSDP}. Let $r$ be the rank of $G(\xb)$ and let
$\bar{E}\in \R^{m\times m-r}$ be a matrix that spans $\Ker G(\xb)$; then, for any $\Yb\in \Lambda(\xb)$, since both $\bar{Y}$ and $G(\xb)$ are positive semidefinite, the complementarity relation $\langle G(\xb), \Yb \rangle=0$ is equivalent to $ G(\xb) \Yb  =0$, which is in turn equivalent to saying that $\Im \Yb \subseteq (\Im G(\xb))^\perp = \Ker G(\xb)$, where $(\Im G(\xb))^\perp$ denotes the orthogonal complement of $\Im G(\xb)$.
Therefore, $\Yb$ is complementary to $G(\xb)$ if, and only if, it has the form
\begin{equation}	
	\label{def:bloques_Y}
	\Yb = \bar{E}\tilde{Y}\bar{E}^\top,
\end{equation}
where $\tilde{Y} \in \S^{m-r}_+$ is not necessarily a diagonal matrix. Moreover, note that $\tilde{Y}$ is not necessarily positive definite; that is, $\dim(\Ker \Yb)$ does not necessarily coincide with $r$. When they do coincide, $\xb$ and $\Yb$ are said to be \emph{strictly complementary}~\cite{Shapiro1997}.

It is known that the KKT conditions are not necessary for local optimality unless they are paired with a constraint qualification. For instance, one of the most studied constraint qualifications for \eqref{NSDP} is \emph{Robinson's CQ}~\cite[Def. 3]{Robinson1976}, which holds at a point $\xb\in \F$ if $\textnormal{there exists } d\in \R^n \textnormal{ such that }$ 
\[
	G(\xb)+ DG(\xb)[d]  \in \int\sym_+,
\]
where $\int \sym_+$ denotes the topological interior of $\sym_+$, which in turn coincides with the set of $m\times m$ symmetric positive definite matrices.
Alternatively, following Bonnans and Shapiro~\cite[Prop. 2.97]{bshapiro}, it is possible to say that (the dual form of) Robinson's CQ holds at $\bar{x}\in \F$ if, and only if, 
\begin{equation}\label{eq:robinsondual}
	\left.	
	\begin{aligned}	
		DG(\xb)^*[Y]=0\\
		\langle G(\xb),Y\rangle=0\\
		Y\succeq 0
	\end{aligned}
	\right\}
	\Rightarrow
	Y=0.
\end{equation}
Another well-known fact is that, for every local minimizer $\xb\in \F$, the set $\Lambda(\xb)$ is nonempty and compact if, and only if, Robinson's CQ holds at $\xb$ (see~\cite[Props. 3.9 and 3.17]{bshapiro} for details). This makes Robinson's CQ the natural analogue of the \emph{Mangasarian-Fromovitz CQ} (MFCQ), from NLP, in NSDP.

\section{The nondegeneracy condition for NSDP}\label{sec:ndg}

In this section, we discuss the well-known nondegeneracy condition introduced by Shapiro and Fan~\cite[Sec. 2]{shapfan}. 
We derive a different characterization for it that suggests a way of obtaining a weaker constraint qualification with potentially interesting properties. But firstly, we briefly recall some elements of convex analysis.

The (\emph{Bouligand}) tangent cone to 
a set $C$ at a point $y\in C$ is defined as
\[
	T_C(y)\doteq
	\left\{
	d \colon
		\begin{array}{l}
		\exists \seq{d}\to d, \ \exists \seq{t}\to 0, \ t^k>0,\\
		\forall k\in \N, \ y+t^k d^k\in C
		\end{array}
	\right\}.
\]
In particular, when $C=$
$\sym_+$, at a given $M\in \S^m_+$, it can be characterized as follows
\begin{equation*}
T_{\S^m_+}(M) = \left\{N\in \sym \colon d^\top N d \geqslant 0, \ \forall d\in \Ker M\right\}.
\end{equation*}
Therefore, for every feasible $\xb$ we have
\begin{equation}\label{eq:tangent_cone_SDP}
T_{\S^m_+}(G(\xb)) = \left\{N\in \sym\colon \bar{E}^\top N \bar{E} \succeq 0\right\},
\end{equation}
whenever $\bar{E}$ spans $\Ker G(\xb)$.

It is clear from \eqref{eq:tangent_cone_SDP} that the largest subspace contained in $T_{\S^m_+}(G(\xb))$, that is, its \emph{lineality space}, can be characterized as follows:
\begin{equation}\label{eq:lin_tangent_cone_SDP}
\lin(T_{\S^m_+}(G(\xb))) = \left\{N\in \sym\colon \bar{E}^\top N \bar{E}=0\right\}.
\end{equation}

The nondegeneracy condition of Shapiro and Fan is verified at $\xb$ when the linear subspaces $\Im DG(\xb)$ and $\lin(T_{\sym_+}(G(\xb)))$ of $\sym$ meet transversally, which is why it was originally called \emph{transversality} in~\cite{shapfan}. In mathematical language:
\begin{definition}[Def. 4 from~\cite{Shapiro1997}]\label{def:ndgclass}
	A point $\xb\in \F$ is said to satisfy the \emph{nondegeneracy} condition when the following relation is satisfied:
	\begin{equation}\label{def:NDG}
		\Im DG(\xb) + \lin(T_{\sym_+}(G(\xb)) ) = \sym.
	\end{equation}

\end{definition}

If $\xb$ is a local solution of \eqref{NSDP}, then nondegeneracy implies that $\Lambda(\xb)$ is a singleton; and the converse is also true in the presence of strict complementarity (see~\cite[Thm. 2.2 and Sect. 3]{Shapiro1997a}). Hence, Definition~\ref{def:ndgclass} is generally seen as an analogue of LICQ, from NLP, in NSDP. However, this analogy is tied to how the link between NLP and NSDP is made~\cite{Shapiro1997}. For example, when an NLP problem with constraints $g_1(x)\geqslant 0, \ldots, g_m(x)\geqslant 0$ is modelled as an NSDP with a single \emph{structurally diagonal} conic constraint; that is, with $G$ in the form
\begin{equation}\label{sdp:diagg}
	G(x)\doteq
	\begin{bmatrix}
		g_1(x) & &\\
		& \ddots &\\
		& & g_m(x)
	\end{bmatrix}\succeq 0;
\end{equation}
then Definition~\ref{def:ndgclass} fails whenever there is some $\bar{Y}\in\Lambda(\xb)$ and some nonzero $H\in \sym$ with only zeros in its diagonal, such that $H\succeq -\bar{Y}$, regardless of the linear independence of the set $\{\nabla g_1(\xb), \ldots, \nabla g_m(\xb)\}$. In fact, structurally diagonal NSDP problems are in general expected to lack uniqueness of the Lagrange multiplier.

On the other hand, it is well-known (cf. \cite[Sect. 4.6.1]{bshapiro}) that
a feasible point $\xb$ satisfies the nondegeneracy condition if, and only if, either $\Ker G(\xb)= \{0 \}$ or the linear mapping $\psi_{\xb}:\R^n \to \mathbb{S}^{m-r}$, defined by
\begin{equation}\label{def:psi_trans}
	\psi_{\xb}(d) \doteq \bar{E}^\top  D  G(\xb) [d] \bar{E},
\end{equation}
is surjective for any $\bar{E}$ that spans $\Ker G(\xb)$.
As a direct consequence of the equivalence above, it is possible to characterize  Definition~\ref{def:ndgclass} as follows:
\begin{proposition}[Prop. 6 from~\cite{Shapiro1997}]\label{prop:ndgvii}
Let $\xb\in \F$ and let $r$ denote the rank of $G(\xb)$. Then, $\xb$ satisfies the nondegeneracy condition  if, and only if, either $\Ker G(\xb)=\{0\}$ or the vectors 
\begin{equation}\label{def:vii}
	\begin{aligned}
		v_{ij}(\xb,\bar{E}) & \doteq \left[\bar{e}_i^\T D_{x_1}G(\xb) \bar{e}_j, \ldots, \bar{e}_i^\T D_{x_n}G(\xb) \bar{e}_j\right]^\T\\
		& = DG(\xb)^* \left[\frac{\bar{e}_i\bar{e}_j^\T+\bar{e}_j\bar{e}_i^\T}{2}\right],
	\end{aligned}
	\quad  1\leq i\leq j \leq m-r
\end{equation}
are linearly independent, where $\bar{E}\in\R^{m\times m-r}$ is an arbitrary fixed matrix that spans $\Ker G(\xb)$, and $\bar{e}_i$ denotes the $i$-th column of $\bar{E}$, for all $i\in \{1,\ldots,m-r\}$. 

\end{proposition}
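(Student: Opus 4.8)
The plan is to build directly on the surjectivity characterization recalled just above the statement: $\xb$ is nondegenerate if and only if either $\Ker G(\xb)=\{0\}$ or the linear map $\psi_{\xb}(d)=\bar{E}^\T DG(\xb)[d]\bar{E}$ is surjective onto $\S^{m-r}$. Since the case $\Ker G(\xb)=\{0\}$ is already accounted for in both formulations, it suffices to prove, under the assumption $r<m$, that $\psi_{\xb}$ is surjective if and only if the vectors $v_{ij}(\xb,\bar{E})$, $1\le i\le j\le m-r$, are linearly independent. The natural bridge between a surjectivity statement about a map into a matrix space and a linear independence statement about a family of vectors in $\R^n$ is duality, so the first step is to pass to the adjoint: $\psi_{\xb}$ is surjective if and only if $\psi_{\xb}^*\colon\S^{m-r}\to\R^n$ is injective.

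Next I would compute this adjoint explicitly. Using the Frobenius inner product and the cyclic property of the trace, for every $d\in\R^n$ and $W\in\S^{m-r}$ one has $\inner{\psi_{\xb}(d)}{W}=\inner{DG(\xb)[d]}{\bar{E} W\bar{E}^\T}=\inner{d}{DG(\xb)^*[\bar{E} W\bar{E}^\T]}$, whence $\psi_{\xb}^*[W]=DG(\xb)^*[\bar{E} W\bar{E}^\T]$. In other words, $\psi_{\xb}^*=DG(\xb)^*\circ\Phi$, where $\Phi\colon\S^{m-r}\to\sym$ is the linear map $\Phi(W)\doteq\bar{E} W\bar{E}^\T$.

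The key structural observation is then that $\Phi$ carries a convenient basis of $\S^{m-r}$ exactly onto the matrices appearing in the definition of the $v_{ij}$. Taking $F_{ij}\doteq\tfrac12(e_ie_j^\T+e_je_i^\T)$ for $1\le i\le j\le m-r$ (with $e_i$ the canonical vectors of $\R^{m-r}$, so that $\bar{E} e_i=\bar{e}_i$) as a basis of $\S^{m-r}$, a direct computation gives $\Phi(F_{ij})=\tfrac12(\bar{e}_i\bar{e}_j^\T+\bar{e}_j\bar{e}_i^\T)$, which is precisely the matrix whose image under $DG(\xb)^*$ is $v_{ij}(\xb,\bar{E})$ by the second line of \eqref{def:vii}. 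Because $\bar{E}$ has orthonormal columns it has full column rank, so $\Phi$ is injective; consequently the images $\{\Phi(F_{ij})\}$ form a basis of the subspace $\Im\Phi=\bar{E}\,\S^{m-r}\bar{E}^\T$ of $\sym$. Injectivity of $\psi_{\xb}^*=DG(\xb)^*\circ\Phi$ is therefore equivalent to injectivity of the restriction of $DG(\xb)^*$ to $\Im\Phi$, and a linear map is injective exactly when it sends some (equivalently, any) basis to a linearly independent set. Applying this to the basis $\{\Phi(F_{ij})\}$ shows that $\psi_{\xb}^*$ is injective if and only if the vectors $DG(\xb)^*[\Phi(F_{ij})]=v_{ij}(\xb,\bar{E})$ are linearly independent, which closes the chain of equivalences.

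I expect the only delicate point to be the bookkeeping in the third step: one must check that $\{F_{ij}\}$ really is a basis of $\S^{m-r}$ (there are $\tfrac12(m-r)(m-r+1)$ of them, matching $\dim\S^{m-r}$) and that the symmetrization factors of $\tfrac12$ are handled consistently so that $\Phi(F_{ij})$ matches the matrix in \eqref{def:vii}. Injectivity of $\Phi$ itself is immediate from $\bar{E}^\T\bar{E}=\I_{m-r}$, since $\bar{E} W\bar{E}^\T=0$ forces $W=\bar{E}^\T\bar{E} W\bar{E}^\T\bar{E}=0$. Finally, independence of the conclusion from the particular choice of $\bar{E}$ is inherited from the corresponding independence in the surjectivity characterization, so no separate argument is needed there.
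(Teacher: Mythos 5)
Your argument is correct and complete: passing from surjectivity of $\psi_{\xb}$ to injectivity of its adjoint, identifying $\psi_{\xb}^*=DG(\xb)^*\circ\Phi$ with $\Phi(W)=\bar{E}W\bar{E}^\T$, and using the basis $\{F_{ij}\}$ of $\S^{m-r}$ together with the injectivity of $\Phi$ is exactly the right chain of equivalences, and the bookkeeping of the $\tfrac12$ factors and the dimension count both check out. The paper itself offers no proof of this statement --- it is quoted as Proposition~6 of Shapiro's 1997 paper --- but your route is the standard one and is precisely the adjoint/injectivity technique the authors deploy themselves in the proof of Proposition~\ref{sdp:ndg}, where they diagonalize the test matrix $Y$ instead of expanding it in the basis $\{F_{ij}\}$; so nothing further is needed.
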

Now, inspired by Proposition~\ref{prop:ndgvii}, we present a similar characterization of nondegeneracy that evaluates the linear independence of a narrower set of vectors at the cost of looking at all possible choices of $\bar{E}$ instead of a fixed one. Since our reasoning can be extended to Robinson's CQ, we also characterize it in a similar manner.

\begin{proposition}\label{sdp:ndg}
	Let $\xb\in \F$ and $r= \rank(G(\xb))$. Then, $\xb$ satisfies:
	\begin{enumerate}
	\item Nondegeneracy if, and only if, either $r=m$ or \jo{the set}
	\begin{equation}\label{ndg:theset}
		\left\{v_{ii}(\xb,\bar{E})\colon i\in\{1,\ldots,m-r\}\right\}
	\end{equation} is linearly independent for every matrix $\bar{E}\in\R^{m\times m-r}$ that spans $\Ker G(\xb)$.
	\item Robinson's CQ if, and only if, either $r=m$ or \eqref{ndg:theset} is positive linearly independent for every matrix $\bar{E}\in\R^{m\times m-r}$ that spans $\Ker G(\xb)$.
	\end{enumerate}
\end{proposition}

\begin{proof}
	Let us assume that $r<m$ since the result follows trivially otherwise. 
	Then, for any fixed $\bar{E}\in \R^{m\times m-r}$ such that $G(\xb)\bar{E}=0$ and $\bar{E}^\T \bar{E}=\I_{m-r}$, note that \eqref{ndg:theset} is (positive) linearly independent if, and only if, the following holds: if the scalars $\alpha_1,\ldots,\alpha_{m-r}\in \R$ (with $\alpha_1\geqslant 0$, $\ldots$ , $\alpha_{m-r}\geqslant 0$, respectively) satisfy
\begin{equation}\label{sdp:eqrcq3}
	\sum_{i=1}^{m-r} \alpha_i DG(\xb)^*[\bar{e}_i\bar{e}_i^\T]
	=
	\sum_{i=1}^{m-r}\alpha_i v_{ii}(\xb,\bar{E})
	= 0,
\end{equation}
then one must have $\alpha_1=\ldots=\alpha_{m-r}=0$. That is, \eqref{ndg:theset} is (positive) linearly independent if, and only if, for every matrix $Y$ of the form
\begin{equation}\label{sdp:eqrcq1}
	Y \doteq 
	\sum_{i=1}^{m-r} \alpha_i \bar{e}_i \bar{e}_i^\T
	=
	\bar{E}
	\begin{bmatrix}
		\alpha_1 & &\\
		& \ddots  &\\
		& & \alpha_{m-r}\\
	\end{bmatrix}
	\bar{E}^\T
\end{equation}
where $\alpha_1,\ldots,\alpha_{m-r}\in \R$ (with $\alpha_1\geqslant 0$, $\ldots$ , $\alpha_{m-r}\geqslant 0$, respectively), we have that
\begin{equation}\label{sdp:eqrcq2}
	DG(\xb)^*[Y]=0 \ \Rightarrow \ Y=0.
\end{equation}
With this in mind, we recall that:
\begin{itemize}
	\item For any fixed choice of $\bar{E}$ spanning $\Ker G(\xb)$, nondegeneracy holds at $\xb$ if, and only if, \eqref{sdp:eqrcq2} holds for every $Y$ in the form $Y=\bar{E}Z\bar{E}^\T$ with $Z\in \mathbb{S}^{m-r}$ (Proposition~\ref{prop:ndgvii});
	\item Robinson's CQ holds at $\xb$ if, and only if, \eqref{sdp:eqrcq2} holds for every $Y\succeq 0$ such that $\langle G(\xb),Y \rangle=0$ (see \eqref{eq:robinsondual});
\end{itemize}
and it becomes clear that nondegeneracy (respectively, Robinson's CQ) implies that \eqref{ndg:theset} is (positive) linearly independent, for every $\bar{E}$ as described above, because every $Y$ as in~\eqref{sdp:eqrcq1} satisfies $\langle G(\xb),Y\rangle=0$.

To prove the converse of item 1, assume that \eqref{ndg:theset} is linearly independent for all $\bar{E}$ that spans $\Ker G(\xb)$. Let $Y=\bar{E}Z\bar{E}^\T$ be such that $Z\in \mathbb{S}^{m-r}$ and let $C\in \R^{m-r\times m-r}$ be an orthogonal matrix such that $C^\T Z C=\Diag(z_1,\ldots,z_{m-r})$, where $\Diag(z_1,\ldots,z_{m-r})\in \S^{m-r}$ is a diagonal matrix whose $i$-th diagonal entry is $z_i$, with $i\in \{1,\ldots,m-r\}$. Then, note that $\bar{E}C^\T$ also spans $\Ker G(\xb)$, which puts
\[
	Y=\bar{E}C^\T \Diag(z_1,\ldots,z_{m-r}) (\bar{E}C^\T)^\T
\] in format \eqref{sdp:eqrcq1}; by our previous assumption \eqref{sdp:eqrcq2} holds for $Y$ and we conclude that nondegeneracy holds at $\xb$.

Now, to prove the converse of item 2, assume that \eqref{ndg:theset} is positive linearly independent for all $\bar{E}$ that spans $\Ker G(\xb)$, and let $Y$ be such that $DG(\xb)^*[Y]=0$, $\langle G(\xb),Y\rangle=0$ and $Y\succeq 0$. It is elementary to see that there exists some matrix $\bar{E}$ that spans $\Ker G(\xb)$, such that $Y$ has the form \eqref{sdp:eqrcq1}. It follows from our hypothesis that $Y=0$ and because $Y$ is arbitrary, Robinson's CQ holds at $\xb$.
	\jo{\hfill\qed}
\end{proof}
The characterizations of nondegeneracy and Robinson's CQ from Proposition~\ref{sdp:ndg} may seem less practical than the one from Proposition~\ref{prop:ndgvii}, but it reveals a clear path for defining weaker CQs by ruling out some particular choices of $\bar{E}$, which is the main result of the next subsection.

We recall that Wachsmuth~\cite{Wachsmuth2013} proved for NLPs that LICQ is equivalent to the uniqueness of the Lagrange multiplier for any objective function $f$ (the unique multiplier may vary with $f$). Thanks to  Proposition~\ref{sdp:ndg} this characterization can be straightforwardly extended to NSDP replacing LICQ by nondegeneracy, which we omit.

\subsection{Sequences of eigenvectors and weak-nondegeneracy}

In~\cite{ahv}, Andreani et al. introduce a constructive technique for proving the existence of Lagrange multipliers for \eqref{NSDP}, which is based on the so-called \emph{sequential optimality conditions} from NLP~\cite{ahm10}. The core idea of their proof is to apply an external penalty algorithm to \eqref{NSDP} after regularizing it around a given local minimizer, to obtain a sequence of approximate KKT points converging to it, as follows:

\begin{theorem}[Thm. 3.2 from \cite{ahv}]\label{sdp:minakkt}
	Let $\xb$ be a local minimizer of \eqref{NSDP}. Then, for any sequence $\{\rho_k\}_{k\in \N}\to +\infty$, there exists some $\seq{x}\to \xb$, such that for every $k\in \N$, $x^k$ is a local minimizer of the regularized penalty function 
	\begin{equation*}\label{sdp:akktpen}
		f(x)+\frac{1}{2}\enorm{x-\xb}^2 
	  	+ \frac{\rho_k}{2} \norm{\Pi_{\sym_+}(-G(x))}^2.
	\end{equation*}
	In particular, computing derivatives we obtain $\nabla_x L(x^k,Y^k)\to 0$, where $Y^k \doteq \rho_k \Pi_{\sym_+}(-G(x^k))$.
\end{theorem}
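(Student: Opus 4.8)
The plan is to run a localized, Tikhonov-regularized external penalty scheme and read off stationarity in the limit. First I would use local optimality to fix $\delta>0$ such that $f(x)\geqslant f(\xb)$ for every $x\in\F$ with $\enorm{x-\xb}\leqslant\delta$, and set
\[
\phi_k(x)\doteq f(x)+\tfrac{1}{2}\enorm{x-\xb}^2+\tfrac{\rho_k}{2}\norm{\projs(-G(x))}^2.
\]
Since $f$, $G$, and $\projs$ are continuous, $\phi_k$ is continuous and thus attains a minimum over the compact ball $B_\delta\doteq\{x\colon\enorm{x-\xb}\leqslant\delta\}$ at some point $x^k$. This $x^k$ is the candidate sequence; the two remaining tasks are to prove $x^k\to\xb$ and then to differentiate $\phi_k$ at $x^k$.

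For convergence, the crucial observation is that $\xb$ is feasible, so $-G(\xb)\preceq 0$ and $\projs(-G(\xb))=0$, whence $\phi_k(\xb)=f(\xb)$. Optimality of $x^k$ then gives the master inequality
\[
f(x^k)+\tfrac{1}{2}\enorm{x^k-\xb}^2+\tfrac{\rho_k}{2}\norm{\projs(-G(x^k))}^2\leqslant f(\xb).
\]
Because $f$ is bounded below on $B_\delta$, the product $\tfrac{\rho_k}{2}\norm{\projs(-G(x^k))}^2$ stays uniformly bounded, and since $\rho_k\to+\infty$ this forces $\norm{\projs(-G(x^k))}\to 0$. Extracting any convergent subsequence $x^{k_j}\to x\s\in B_\delta$ and using continuity yields $\projs(-G(x\s))=0$, i.e. $G(x\s)\succeq 0$; hence $x\s$ is feasible and $f(x\s)\geqslant f(\xb)$ by the choice of $\delta$. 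Passing to the limit in the master inequality gives $f(x\s)+\tfrac{1}{2}\enorm{x\s-\xb}^2\leqslant f(\xb)\leqslant f(x\s)$, which forces $x\s=\xb$. As every subsequential limit equals $\xb$ and $B_\delta$ is compact, the whole sequence satisfies $x^k\to\xb$; in particular, for all large $k$ the point $x^k$ lies in the interior of $B_\delta$ and is therefore an unconstrained local minimizer of $\phi_k$, which is the first assertion.

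It remains to read off the gradient condition. The key smoothness fact is that $\norm{\projs(-G(x))}=\dist(G(x),\sym_+)$ and that $M\mapsto\tfrac{1}{2}\dist^2(M,\sym_+)$ is continuously differentiable with gradient $M-\projs(M)=-\projs(-M)$, the standard envelope identity for the projection onto a closed convex cone. By the chain rule, the $x$-gradient of $\tfrac{1}{2}\norm{\projs(-G(x))}^2$ equals $-DG(x)^*[\projs(-G(x))]$. Since $x^k$ minimizes $\phi_k$ over an open neighbourhood for large $k$, the stationarity condition $\nabla\phi_k(x^k)=0$ reads
\[
\nabla f(x^k)+(x^k-\xb)-\rho_k\,DG(x^k)^*[\projs(-G(x^k))]=0.
\]
Defining $Y^k\doteq\rho_k\projs(-G(x^k))\succeq 0$, this rearranges to $\nabla_x L(x^k,Y^k)=\nabla f(x^k)-DG(x^k)^*[Y^k]=-(x^k-\xb)$, and since $x^k\to\xb$ the right-hand side tends to $0$, as claimed.

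The main obstacle I anticipate is not the penalty machinery itself, which is classical, but two finer points. The first is ensuring that the subsequential limit is exactly $\xb$ rather than some other nearby feasible minimizer: this is precisely what the Tikhonov term $\tfrac{1}{2}\enorm{x-\xb}^2$ is engineered to guarantee, since among the feasible points of $B_\delta$ attaining the value $f(\xb)$ it makes $\xb$ the unique minimizer of $f(x)+\tfrac{1}{2}\enorm{x-\xb}^2$. The second is justifying the $C^1$ differentiability of the squared projection together with the exact gradient formula $-DG(x)^*[\projs(-G(x))]$, which is what converts the gradient of the penalty term into the clean multiplier $Y^k$; once these are in hand, the conclusion is immediate.
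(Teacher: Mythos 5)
Your proof is correct and is exactly the standard argument behind the cited result (Thm.\ 3.2 of \cite{ahv}): minimize the Tikhonov-regularized penalty over a compact ball around $\xb$, use the master inequality $\phi_k(x^k)\leqslant f(\xb)$ to force feasibility of limit points and then $x^k\to\xb$ via the proximal term, and differentiate using the $C^1$ envelope identity $\nabla\bigl(\tfrac12\dist^2(\cdot,\sym_+)\bigr)(M)=-\projs(-M)$. The only cosmetic point is that $x^k$ is an unconstrained local minimizer only once it enters the interior of the ball, so the conclusion holds after discarding finitely many indices, which is harmless.
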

With this result at hand, the authors prove that the sequence $\seq{Y}$ must be bounded in the presence of Robinson's CQ, and that all of its limit points are Lagrange multipliers associated with $\xb$~\cite[Thm. 6.1]{ahv}. Furthermore, the proof of this fact under nondegeneracy follows easily by contradiction: suppose that $\seq{Y}$ is unbounded, and take any limit point $\bar{Y}$ of the sequence $\left\{Y^k/\norm{Y^k}\right\}_{k\in \N}$; then:
\begin{enumerate}
\item It follows from $\nabla_x L(x^k,Y^k)\to 0$ that $DG(\xb)^*[\bar{Y}]=0$, which means $\bar{Y}\in \Ker DG(\xb)^*=\Im DG(\xb)^\perp$;
\item By the definition of $Y^k$, we have $0\neq\bar{Y}\succeq 0$ and $\langle G(\xb),\bar{Y}\rangle=0$, so $\bar{Y}\in \lin(T_{\S^m_+}(G(\xb)))^\perp$;
\end{enumerate}
Hence, $\bar{Y}\in \Im DG(\xb)^\perp \cap \lin(T_{\S^m_+}(G(\xb)))^\perp$, which contradicts nondegeneracy.

With a single extra step, which is to take a spectral decomposition of $Y^k$ for each $k$, the reasoning of the previous paragraph can be put in the same terms as Proposition~\ref{sdp:ndg}. Indeed,
observe that $\lambda_i(Y^k)=[\rho_k\lambda_i(-G(x^k))]_+=0$  for all $i\in \{m-r+1,\ldots,m\}$ and all $k$ large enough, because
\[
	\lambda_i(-G(x^k))=-\lambda_{m-i+1}(G(x^k)).
\]
So
\[
	\nabla_x L(x^k, Y^k) = \nabla f(x^k)- \sum_{i=1}^{m-r} [\rho_k\lambda_i(-G(x^k))]_+ v_{ii}(x^k,E^k)\to 0,
\]
where $E^k\in \R^{m\times m-r}$ is a matrix whose $i$-th column is $u_{m-i+1}(G(x^k))$. Then, note that if $E^k$ can be chosen such that at least one of its limit points $\bar{E}$ ensures linear independence of $\left\{v_{ii}(\xb,\bar{E})\colon i\in \{1,\ldots,m-r\}\right\}$, then $\seq{Y}$ must be bounded. Although the first clause of the previous sentence resembles nondegeneracy (as in Proposition~\ref{sdp:ndg}), note that asking for the linear independence of the set $\left\{v_{ii}(\xb,\bar{E})\colon i\in \{1,\ldots,m-r\}\right\}$ when $\bar{E}$ is not a limit point of some sequence $\seq{E}$ of eigenvectors of $G(x^k)$ seems unnecessary for defining a constraint qualification. This motivates us to propose a weaker variant of nondegeneracy in a way that can also be extended to Robinson's CQ, which goes as follows:
\begin{definition}[Weak-nondegeneracy and weak-Robinson's CQ]\label{sdp:licq}
	Let $\xb\in \F$ and let $r$ be the rank of $G(\xb)$. We say that \emph{weak-nondegeneracy} (respectively, \emph{weak-Robinson's CQ}) holds at $\xb$ if either $\Ker G(\xb)=\{0\}$ or: for every sequence $\seq{x}\to \xb$, there exists some sequence of matrices with orthonormal columns $\seq{E}\subseteq \R^{m\times m-r}$ such that:
	\begin{enumerate}
	\item The columns of $E^k$ are eigenvectors associated with the $m-r$ smallest eigenvalues of $G(x^k)$, for each $k\in \N$;
	\item There exists a limit point $\bar{E}$ of $\seq{E}$ such that the set \si{$}\jo{\[}\left\{v_{ii}(\xb,\bar{E})\colon i\in\{1,\ldots,m-r\}\right\},\si{$}\jo{\]} as defined in~\eqref{def:vii}, is (positive) linearly independent.
	\end{enumerate}
\end{definition}
There are a couple of nuances about Definition~\ref{sdp:licq} that should be properly addressed (see also the discussion after Remark~\ref{sdp:ndglicq}). First, we recall that the eigenvector functions $u_i(G(x))$, $i\in \{m-r+1,\ldots,m\}$ are not necessarily continuous at a given point $\xb$; so weak-nondegeneracy (and weak-Robinson's CQ) relies on the ``sequential continuity of eigenvectors'' along a given path. Second, for any fixed $\xb\in \F$ and any $\seq{x}\to \xb$, the sequence $\seq{E}$ described in Definition~\ref{sdp:licq} is well-defined for $k$ sufficiently large, since the $r$ largest eigenvalues of $G(x^k)$ are necessarily bounded away from zero. 

\begin{remark}\label{rem:limsup}
	Based on the previous discussion, it is worth mentioning that weak-nondegeneracy (and weak-Robinson's CQ) can be equivalently stated in terms of a certain notion of continuity of the eigenvectors of $G(x)$. To see why, consider a feasible point $\xb\in \F$ and let $r$ be the rank of $G(\xb)$. Because $r<m$, it follows that $\lambda_{r}(G(x))>\lambda_{r+1}(G(x))$ for every $x$ close enough to $\xb$, so the following set is well-defined:
\begin{equation}\label{def:er}
\mathcal{B}(x)\doteq \left\{  E\in \R^{m\times (m-r)}\colon \begin{array}{l} G(x)e_i=\lambda_{m-i+1}(G(x))e_i, \ \forall i\in\{1,\ldots,m-r\} \\ E^\T E=\I_{m-r}\end{array} \right\}
\end{equation}
where $E\doteq[e_1,\ldots,e_{m-r}]$. The set above consists of all matrices whose columns are orthonormal eigenvectors associated with the $m-r$ smallest eigenvalues of $G(x)$. Moreover, for any sequence $\seq{x}\to \xb$ recall the Painlev{\'e}-Kuratowski \textit{upper limit}~\cite[Def. 2.52]{bshapiro}) of the sequence of images $\{\mathcal{B}(x^k)\}_{k\in \N}$, defined as
\[
	\limsup_{k\to\infty} \mathcal{B}(x^k)\doteq \left\{z\colon \exists I\subseteq \N \textnormal{ infinite}, \ \exists \{z^k\}_{k\in I}\to z, \ \forall k\in I, \ z^k\in \mathcal{B}(x^k) \right\}.
\]
In these terms, it is easy to see that weak-nondegeneracy (respectively, weak-Robinson's CQ) holds at $\xb$ if, and only if, either $\Ker G(\xb)=\{0\}$ or, for every sequence $\seq{x}\to \xb$, there exists some $\bar{E}\in \limsup_{k\to\infty} \mathcal{B}(x^k)$ such that \si{$}\jo{\[}\left\{v_{ii}(\xb,\bar{E})\colon i\in\{1,\ldots,m-r\}\right\}\si{$}\jo{\]} is (positive) linearly independent. 
\end{remark}

Although the characterization of Remark~\ref{rem:limsup} may shorten notation, in order to check whether weak-nondegenearcy holds or not at a given point $\xb$ requires the computation of the set $\mathcal{B}(\xb)$, which may be complicated in practice. Therefore, it is important to emphasize that $\mathcal{B}(\xb)$ is not meant to be explicitly computed because weak-nondegeneracy is not meant to be manually checked at any point, except for very specific cases with a convenient eigenvector structure (see Examples \ref{ex:wndgnotndg} and \ref{ex:X}); instead, the main purpose of weak-nondegeneracy (and weak-Robinson's CQ) is to serve as a theoretical tool for building the convergence theory of iterative algorithms, as it was presented in the proof of Theorem 2 for the external penalty method. 
%
%
In this context, knowledge of the problem solution is usually limited to an approximation obtained by truncating the method's output sequence, which ends up taking away some of the meaning of checking constraint qualifications in practice.

	The discussion that motivated Definition~\ref{sdp:licq} already suggests that it indeed describes a genuine constraint qualification, and it also provides an outline of how to prove it. Nevertheless, we state and prove this fact with appropriate mathematical rigor below. Although we prove the next result for weak-Robinson's CQ, observe that the analogous statement for weak-nondegeneracy follows as a corollary.
\begin{theorem}\label{thm:wndgcq}
Every local minimizer $\xb\in \F$ of \eqref{NSDP} that satisfies weak-Robinson's CQ also satisfies the KKT conditions. By extension, the same holds for weak-nondegeneracy.
\end{theorem}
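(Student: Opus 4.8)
The plan is to combine the penalty-based construction of Theorem~\ref{sdp:minakkt} with weak-nondegeneracy, exactly along the lines of the discussion preceding Definition~\ref{sdp:licq}: produce a sequence of approximate stationary points near $\xb$, write the associated penalty multipliers in an eigenbasis furnished by weak-nondegeneracy, use linear independence to force these multipliers to stay bounded, and let a limit point be the desired Lagrange multiplier. First I would dispose of the trivial case $\Ker G(\xb)=\{0\}$: then $G(\xb)\succ 0$, so $\xb$ is interior to $\F$, whence $\nabla f(\xb)=0$ and $\Yb=0\in\Lambda(\xb)$. Assume from now on that $r\doteq\rank(G(\xb))<m$.

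Next I would fix any $\rho_k\to+\infty$ and invoke Theorem~\ref{sdp:minakkt} to obtain $\seq{x}\to\xb$ with $\nabla_xL(x^k,Y^k)\to 0$, where $Y^k\doteq\rho_k\projs(-G(x^k))\succeq 0$. Applying weak-nondegeneracy to this particular sequence yields a sequence $\seq{E}$ of matrices whose orthonormal columns are eigenvectors of $G(x^k)$ for its $m-r$ smallest eigenvalues, together with a limit point $\bar{E}$ along which $\{v_{ii}(\xb,\bar{E})\colon i\in\{1,\dots,m-r\}\}$ is linearly independent; I would pass once and for all to the subsequence with $E^k\to\bar{E}$. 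The computational core is then to re-express $Y^k$ in this basis. Since the $r$ largest eigenvalues of $G(x^k)$ remain bounded away from zero, for large $k$ the range of $\projs(-G(x^k))$ is contained in the span of the columns of $E^k$; because those columns are genuine eigenvectors, the projection is diagonalized by $E^k$ irrespective of the freedom within degenerate eigenspaces, so $Y^k=E^k\Diag(\mu_1^k,\dots,\mu_{m-r}^k)(E^k)^\T$ with $\mu_i^k\ge 0$. Consequently $DG(x^k)^*[Y^k]=\sum_{i=1}^{m-r}\mu_i^k\,v_{ii}(x^k,E^k)$, and Theorem~\ref{sdp:minakkt} gives $\nabla f(x^k)-\sum_{i=1}^{m-r}\mu_i^k\,v_{ii}(x^k,E^k)\to 0$.

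Then I would prove that $(\mu_i^k)_i$ is bounded along this subsequence. If not, set $t^k\doteq\max_i\mu_i^k\to+\infty$ and pass to a further subsequence along which $\mu_i^k/t^k\to\beta_i\in[0,1]$ with $\max_i\beta_i=1$. Dividing the previous relation by $t^k$ and using $\nabla f(x^k)\to\nabla f(\xb)$ and $v_{ii}(x^k,E^k)\to v_{ii}(\xb,\bar{E})$ (continuity of $DG$ in $x$ and of $v_{ii}$ in the eigenbasis), the left-hand side vanishes in the limit, giving $\sum_{i}\beta_i\,v_{ii}(\xb,\bar{E})=0$ with not all $\beta_i$ zero, which contradicts linear independence. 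Hence $(\mu_i^k)_i$ is bounded; extracting $\mu_i^k\to\bar\mu_i\ge 0$ and setting $\Yb\doteq\bar{E}\Diag(\bar\mu_1,\dots,\bar\mu_{m-r})\bar{E}^\T\succeq 0$, the limit of the displayed relation yields $\nabla f(\xb)=DG(\xb)^*[\Yb]$. Finally, $\bar{E}$ has orthonormal columns that are limits of eigenvectors whose eigenvalues tend to $0$, so $G(\xb)\bar{E}=0$ and $\bar{E}$ spans $\Ker G(\xb)$; thus $\Im\Yb\subseteq\Ker G(\xb)$ forces $\langle G(\xb),\Yb\rangle=0$, and $\Yb\in\Lambda(\xb)$, proving that $\xb$ is a KKT point.

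The main obstacle is the eigenvector bookkeeping under possible eigenvalue degeneracy, together with the correct order of the limiting arguments. One must argue that the basis-free matrix $Y^k=\rho_k\projs(-G(x^k))$ really does factor as $E^k\Diag(\mu_1^k,\dots,\mu_{m-r}^k)(E^k)^\T$ for the externally supplied basis $E^k$ --- this is where it matters that the columns of $E^k$ are eigenvectors, so that the projection restricted to each eigenspace is a scalar multiple of the identity and the rotational ambiguity in $\seq{E}$ is harmless. Equally important is to select the subsequence converging to the good limit point $\bar{E}$ first, so that linear independence of $\{v_{ii}(\xb,\bar{E})\}$ is available when running the normalization-and-contradiction step; reversing this order would leave no guarantee that the surviving limit point of $\seq{E}$ is one for which weak-nondegeneracy provides independence.
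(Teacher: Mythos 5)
Your proposal is correct and follows essentially the same route as the paper: invoke Theorem~\ref{sdp:minakkt}, expand $Y^k=\rho_k\projs(-G(x^k))$ in an eigenbasis $E^k$ of $G(x^k)$ so that $DG(x^k)^*[Y^k]=\sum_i\mu_i^k v_{ii}(x^k,E^k)$, and rule out unboundedness of the coefficients by the normalization-and-contradiction argument against the linear independence of $\{v_{ii}(\xb,\bar{E})\}$ supplied by weak-nondegeneracy. The only cosmetic difference is bookkeeping: the paper works with an arbitrary eigenvector sequence and observes at the end that the contradiction holds for every admissible $\seq{E}$ and every limit point, whereas you fix the favorable $\seq{E}$ and $\bar{E}$ up front; both orderings are valid since the coefficients $\mu_i^k=[\rho_k\lambda_i(-G(x^k))]_+$ do not depend on the eigenvector choice.
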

\begin{proof}
Let $\bar{x}$ be a local minimizer of \eqref{NSDP} that satisfies weak-Robinson's CQ and let $\seq{x}\to \bar{x}$ and $\seq{Y}$ be the sequences described in Theorem~\ref{sdp:minakkt}, for an arbitrary sequence $\{\rho_k\}_{k\in \N}\to \infty$. If $r=m$, set $\bar{Y}=0$ as a Lagrange multiplier associated with $\xb$ and we are done; so let us assume that $r<m$ from now on. From the local optimality of $x^k$, for each $k\in \N$, we obtain
\begin{equation}\label{sdp:akktstat}
	\nabla f(x^k)+(x^k-\xb)+DG(x^k)^*[Y^k]=0.
\end{equation}
Recall that we assume, without loss of generality, that $\lambda_1(-G(x^k))\geqslant \ldots\geqslant \lambda_m(-G(x^k))$, for every $k$; and note that when $k$ is large enough, say greater than some $k_0\in \N$, we necessarily have $\lambda_i(-G(x^k))< 0$ for all $i\in \{m-r+1,\ldots,m\}$ since $G(x^k)\to G(\xb)$ and eigenvalues $\lambda_i(\cdot)$ are continuous mappings.  Then, for each $k>k_0$, we have
		\[
			Y^k=\sum_{i=1}^{m-r} \alpha^k_i e_i^k(e_i^k)^\T,
		\]	
	where $\alpha_i^k\doteq [\rho_k\lambda_i(-G(x^k))]_+\geqslant 0$ 
	 and $e_i^k\doteq u_{m-i+1}(G(x^k))$ is an arbitrary unitary eigenvector associated with $\lambda_{m-i+1}(G(x^k))$, for each $i\in \{1,\ldots,m-r\}$. Set $E^k\doteq[e_1^k,\ldots,e_{m-r}^k]$. Since $\seq{E}$ is bounded, we may pick any of its limit points $\bar{E}=[\bar{e}_1,\ldots,\bar{e}_{m-r}]$ and assume, taking a subsequence if necessary, that it converges to $\bar{E}$, which spans $\Ker G(\xb)$. Then, observe that \eqref{sdp:akktstat} implies
	\begin{equation*}\label{sdp:decompstat}
		\nabla f(x^k)-\sum_{i=1}^{m-r} \alpha^k_i DG(x^k)^*[e_i^k(e_i^k)^\T]\to 0,
	\end{equation*}
	but since $DG(x^k)^*[e_i^k (e_i^k)^\top]=v_{ii}(x^k,E^k)$ (see~\eqref{def:vii}),
	we can rewrite it as
	\begin{equation}\label{sdp:akktvii}
		\nabla f(x^k)-\sum_{i=1}^{m-r} \alpha^k_i v_{ii}(x^k,E^k)\to 0.
	\end{equation}
	
	If $\left\{(\alpha_i^k,\ldots,\alpha_{m-r}^k)\right\}_{k\in \N}$ has any convergent subsequence, denote its limit point by $\bar{\alpha}\doteq(\bar\alpha_i,\ldots,\bar\alpha_{m-r})$, and note that $\bar{\alpha}$ generates a Lagrange multiplier for $\xb$, which is
	\begin{equation}\label{weak:multiplier}
		\bar{Y}\doteq \sum_{i=1}^{m-r}\bar{\alpha}_i\bar{e}_i\bar{e}_i^\T.
	\end{equation}
	Hence, it suffices to prove that $\seq{\alpha_i}$, $i\in \{1,\ldots,m-r\}$, must be bounded under weak-Robinson's CQ. Let us assume for a moment that the sequences $\seq{\alpha_i}$ are unbounded, which means
	\[
		m^k\doteq \max\left\{\alpha_i^k\colon i\in \{1,\ldots,m-r\}\right\}\to \infty.
	\]
	Note that $\left\{(\alpha^k_1,\ldots,\alpha^k_{m-r})/m^k\right\}_{k\in \N}$ must be bounded and it must also have a nonzero limit point, which we will denote by $(\tilde{\alpha}_1,\ldots,\tilde{\alpha}_{m-r})\geqslant 0$. We assume without loss of generality, that $\left\{(\alpha^k_1,\ldots,\alpha^k_{m-r})/m^k\right\}_{k\in \N}\to (\tilde{\alpha}_1,\ldots,\tilde{\alpha}_{m-r})$. After dividing \eqref{sdp:akktvii} by $m^k$ for each $k$ and taking limit $k\to +\infty$, we obtain 
	\[
		\sum_{i=1}^{m-r}\tilde{\alpha}_iv_{ii}(\xb,\bar{E})=0,
	\]
	which means $\left\{v_{ii}(\xb,\bar{E})\colon i\in \{1,\ldots,m-r\}\right\}$ is positive linearly dependent. However, since our analyses hold for any arbitrary choice of $\seq{E}$ and any $\bar{E}$, this contradicts weak-Robinson's CQ.
		\jo{\hfill\qed}
\end{proof}

Let us briefly analyse a direct application of weak-Robinson's CQ: As an intermediary step of the proof of Theorem~\ref{thm:wndgcq}, we proved that every feasible limit point of a sequence described in Theorem~\ref{sdp:minakkt} must satisfy the KKT conditions under weak-Robinson's CQ. However, the sequences $\seq{x}$ and $\seq{Y}$ described in Theorem~\ref{thm:wndgcq} are precisely the ones that are generated by a standard external penalty method (that is, \cite[Algorithm 1]{ahv} with the parameter $\Omega^{\max}$ fixed at zero). Thus, every feasible limit point of the external penalty method that satisfies weak-Robinson's CQ must also satisfy the KKT conditions. By extension this also holds for weak-nondegeneracy.

Another interesting property of the weak variants of nondegeneracy and Robinson's CQ is that they are equivalent to LICQ and MFCQ, respectively, when $G$ is a structurally diagonal matrix function (as in \eqref{sdp:diagg}) that models an NLP problem, which in some sense resolves the inconsistency between nondegeneracy and LICQ noted by Shapiro~\cite[Page 309]{Shapiro1997}.
\begin{remark}\label{sdp:ndglicq}
	When $G$ is structurally diagonal, as in \eqref{sdp:diagg}, then $\xb\in \F$ satisfies weak-nondegeneracy if, and only if, the set $\{\nabla g_i(\xb)\colon g_i(\xb)=0\}$ is linearly independent. Indeed, if $r=m$ the result follows trivially, so let us assume that $r<m$. Also, suppose that $\{i\in \{1,\ldots,m\}\colon g_i(\xb)=0\}=\{r+1,\ldots,m\}$, where $r$ is the rank of $G(\xb)$. Clearly, if $\{\nabla g_{r+1}(\xb),\ldots,\nabla g_{m}(\xb)\}$ is linearly independent, then we may take 
	\[
		E^k\doteq
		\begin{bmatrix}
			0\\
			\I_{m-r}
		\end{bmatrix}\in \R^{m\times m-r}
	\]
for all sequences $\seq{x}\to \xb$ to conclude that $\xb$ satisfies weak-nondegeneracy. Conversely, suppose that weak-nondegeneracy holds at $\xb$, take any sequence $\seq{x}\to \xb$ and any $\seq{E}\to \bar{E}\doteq [\bar{e}_i,\ldots,\bar{e}_{m-r}]$ such that \si{$}\jo{\[}\left\{v_{ii}(\xb,\bar{E})\colon i\in\{1,\ldots,m-r\}\right\}\si{$}\jo{\]} is linearly independent. Note that $\bar{E}$ must have the form
	\[
		\bar{E}=
		\begin{bmatrix}
			0\\
			Q
		\end{bmatrix},
		\
		\textnormal{where $Q\in \R^{m-r\times m-r}$ is orthonormal},
	\]
	due to the diagonal structure of $G$ and the fact that $g_{i}(\bar{x})\neq 0$ for all $i\in \{1,\ldots,r\}$. Hence,
	\begin{equation}\label{eq:diagvii}
		v_{ii}(\bar{x},\bar{E})=\sum_{j=r+1}^{m} \nabla g_j(\bar{x})Q_{i,j-r}^2=Dg(\xb)^\T (Q_i\odot Q_i),  
	\end{equation}
	where $Dg(\xb)$ is the Jacobian matrix of $g(x)\doteq(g_{r+1}(x),\ldots,g_{m}(x))$ at $\xb$, the operator $\odot$ is the (Hadamard) entry-wise vector product, and $Q_i$ is the $i$-th column of $Q$, with $i\in \{1,\ldots,m-r\}$. Then,
	\[
		\textnormal{span}\left\{v_{ii}(\xb,\bar{E})\colon i\in\{1,\ldots,m-r\}\right\}\subseteq \Im Dg(\xb)^\T
	\] 
	and, consequently,
	\[
		\begin{aligned}
		m-r & =\dim(\textnormal{span}\left\{v_{ii}(\xb,\bar{E})\colon i\in\{1,\ldots,m-r\}\right\}) & \\
		& \leqslant
		\dim(\Im Dg(\xb)^\T) & \\
		& =
		\rank (Dg(\xb)^\T)\leqslant m-r
		\end{aligned}
	\]
	Hence, $\rank (Dg(\xb)^\T) = m-r$, which means that $\{\nabla g_{r+1}(\xb),\ldots,\nabla g_{m}(\xb)\}$ is linearly independent. Using similar arguments, thanks to~\eqref{eq:diagvii} which states that the vectors $v_{ii}(\bar{x},\bar{E})$ are nonnegative linear combinations of the columns of $Dg(\xb)^\T$, it is possible to prove that $\xb\in \F$ satisfies weak-Robinson's CQ if, and only if, $\{\nabla g_i(\xb)\colon g_i(\xb)=0\}$ is positive linearly independent, which is in turn equivalent to Robinson's CQ.
\end{remark}

It is clear from~Proposition~\ref{sdp:ndg} that weak-nondegeneracy is implied by nondegeneracy; and we see in the example below that the converse is not true.

%
\begin{example}\label{ex:wndgnotndg}
Consider the following constraint:
	\begin{equation*}
		G(x)\doteq
		\begin{bmatrix}
			x_1 & x_2\\
			x_2 & x_1
		\end{bmatrix}
	\end{equation*}
	at the point $\xb\doteq (0,0)$, which clearly does not satisfy nondegeneracy. Weak-nondegeneracy, on the other hand, holds at $\xb$ as
	\[
		\mathcal{B}(x) = 
		\left\{
			\begin{array}{ll}
			\left\{
				\frac{\pm 1}{\sqrt{2}}
				\begin{bmatrix}
					-1 & 1\\
					 1 & 1
				\end{bmatrix},
				\frac{\pm 1}{\sqrt{2}}
				\begin{bmatrix}
					1 & 1\\
					-1 & 1
				\end{bmatrix}
			\right\}, & \textnormal{ if }  x_2\neq 0\\
			\\
			\left\{
				E\in \R^{2\times 2} \colon E^\T E=\I_2
			\right\}, & \textnormal{ if }  x_2=0
			\end{array}
		\right.
	\]
	for every $x\in \R^2$, according to \eqref{def:er}, so it suffices to take
	\[
		E^k\doteq
		\frac{1}{\sqrt{2}}
		\begin{bmatrix}
			-1 & 1\\
			1 & 1
		\end{bmatrix}\in \mathcal{B}(x^k)
		\quad \textnormal{and} \quad
		\bar{E}\doteq
		\frac{1}{\sqrt{2}}
		\begin{bmatrix}
			-1 & 1\\
			1 & 1
		\end{bmatrix}\in\limsup_{k\to\infty} \mathcal{B}(x^k)
	\]
	for all sequences $\seq{x}\to \xb$ to obtain that $v_{11}(\xb,\bar{E})=[1,-1]$ and $v_{22}(\xb,\bar{E})=[1,1]$ are linearly independent.

This simple example is also important to show that weak-nondegeneracy does not guarantee uniqueness of Lagrange multipliers. For instance, consider the constraint above with the objective function $f(x)\doteq 2x_1$ which has $\xb$ as its global minimizer; then every $\bar{Y}$ in the form
\[
	\bar{Y}\doteq \begin{bmatrix}
		1 -\alpha & 0\\
		0 & 1+\alpha
	\end{bmatrix}
\]
for $\alpha\in [-1,1]\setminus \{0\}$ is a Lagrange multiplier associated with $\xb$.
\end{example}

Another example that serves the same purpose, which can also be used to show how the sparsity structure of the eigenvectors of $G$ is grasped by weak-nondegeneracy is the following:

\begin{example}\label{ex:X}
Consider the constraint:
\[
	G(x)\doteq \begin{bmatrix}
	x_{11} & 0 & x_{13}\\
	0 & x_{22} & 0\\
	x_{13} & 0 & x_{33}
	\end{bmatrix}\succeq 0
\]
and let $\xb\doteq 0$. Nondegeneracy fails at $\xb$, but weak-nondegeneracy holds. To see this, take any sequence $\seq{x}\to \xb$, if $x_{13}^k\neq 0$ for all $k$ (the other case is trivial, so we will omit it), and
\[
	E^k\doteq \begin{bmatrix}
	\frac{-\eta^k_1}{\sqrt{(\eta^k_1)^2+1}} & 0 & \frac{-\eta^k_2}{\sqrt{(\eta^k_2)^2+1}}\\
	0 & 1 & 0\\
	\frac{1}{\sqrt{(\eta^k_1)^2+1}} & 0 & \frac{1}{\sqrt{(\eta^k_2)^2+1}}
	\end{bmatrix},
\] 
where
\[
	\eta_1^k\doteq \frac{-x_{11}^k+x_{22}^k+\sqrt{(x_{11}^k)^2-2x_{22}^k x_{33}^k+(x_{33}^k)^2+4(x_{13}^k)^2}}{2x_{13}^k}
\]
and
\[
	\eta_2^k\doteq \frac{-x_{11}^k+x_{22}^k-\sqrt{(x_{11}^k)^2-2x_{22}^k x_{33}^k+(x_{33}^k)^2+4(x_{13}^k)^2}}{2x_{13}^k}.
\]
In this case, assuming that $x_{13}^k>0$ for all $k$ (which can be done without loss of generality since the other cases are analogous), we have 
\[
	\lim_{k\to \infty} \eta_1^k=\lim_{k\to \infty} \frac{|x_{13}^k|}{x_{13}^k}=1 \quad \textnormal{and} \quad \lim_{k\to \infty} \eta_2^k=\lim_{k\to \infty} -\frac{|x_{13}^k|}{x_{13}^k}=-1,
\] hence
\[
	E^k\to \bar{E}\doteq  \begin{bmatrix}
	\frac{-1}{\sqrt{2}} & 0 & \frac{1}{\sqrt{2}}\\
	0 & 1 & 0\\
	\frac{1}{\sqrt{2}} & 0 & \frac{1}{\sqrt{2}}
	\end{bmatrix},
\]
and computing the vectors of interest we arrive at
\[
	v_{11}(\xb,\bar{E})=\frac{1}{2}\begin{bmatrix}
	1\\
	0\\
	1\\
	-2
	\end{bmatrix},
	\quad
	v_{22}(\xb,\bar{E})=\begin{bmatrix}
	0\\
	1\\
	0\\
	0
	\end{bmatrix}.
	\quad
	v_{33}(\xb,\bar{E})=\frac{1}{2}\begin{bmatrix}
	1\\
	0\\
	1\\
	2
	\end{bmatrix},
\]
which are linearly independent, so weak-nondegeneracy holds at $\xb$. Observe that, in this case, the matrix $E^k$ has the same sparsity structure as $G$.
\end{example}

Moreover, note that weak-nondegeneracy imposes a less demanding dimensionality constraint over \eqref{NSDP}; in fact, in order to verify nondegeneracy, one must have $n\geqslant (m-r)(m-r+1)/2$, while weak-nondegeneracy may hold as long as $n\geqslant m-r$ (Remark~\ref{sdp:ndglicq}). It is also clear from their definitions that weak-nondegeneracy implies weak-Robinson's CQ; and it is possible to show that the converse is not necessarily true. For instance, consider the constraint defined by:
\begin{equation*}
	G(x)\doteq
	\begin{bmatrix}
		x & 0\\
		0 & x
	\end{bmatrix},
\end{equation*}
and note that all orthogonal matrices $\bar{E}\in \R^{2\times 2}$ have in their columns eigenvectors of $G(x)$, for every $x$. Since $v_{11}(\xb,\bar{E})=v_{22}(\xb,\bar{E})=1$ for every $\bar{E}$, it follows that weak-nondegeneracy and weak-Robinson's CQ are equivalent to their strong counterparts in this case. Thus, from Proposition~\ref{sdp:ndg} we see that (weak-)nondegeneracy does not hold, while (weak-)Robinson's CQ does.

It is also clear from Proposition~\ref{sdp:ndg} (item 2) that Robinson's CQ implies weak-Robinson's CQ; however, we were not capable of finding a counterexample for the converse. We conjecture that they are equivalent.

\begin{remark}
If we replace the sequences $\seq{x}\to \xb$ by matrix sequences $\seq{M}\to G(\xb)$ in Definition~\ref{sdp:licq}, then we recover the nondegeneracy condition. Indeed,  for any $\bar E\in\R^{m\times m-r}$ that spans $\Ker G(\xb)$, consider 
$$M^k\doteq \bar U \Lambda^k \bar U^\top, \  \mbox{ with  } \  \bar U \doteq [\bar E , u_{m-r+1}(G(\bar x)),\ldots, u_{m}(G(\bar x))],$$
and $  \Lambda^k \doteq\Diag(y^k)$ such that $y^{k}_{i}\doteq i/k  $ for $i\in \{1,\ldots,m-r\}$, and  $y^{k}_{i}\doteq   \lambda_i(G(\bar x)) $ otherwise. So, clearly $M^k\to G(\bar x)$ and the only convergent sequence $E^k$ to $\bar E$ is $\bar E$ itself. Consequently, when we assume Definition~\ref{sdp:licq} it necessarily follows that $\{v_{ii}(\bar x, \bar E)\colon i\in \{1,\ldots,m-r\}\}$ is linearly independent. Then, since $\bar E$ was chosen arbitrary, Proposition~\ref{sdp:ndg} implies that nondegeneracy holds true.
\end{remark}

\begin{remark}\label{rem:wndgblockdiag}
Remark~\ref{sdp:ndglicq} can be straightforwardly extended to structurally block diagonal matrix constraints, such as
\begin{equation}
  \tag{Block-NSDP}
  \begin{aligned}
    & & G(x)\doteq 
	\begin{bmatrix}
		G_1(x) & & \\
		& \ddots & \\
		& & G_q(x)
	\end{bmatrix}\succeq 0,
  \end{aligned}
  \label{sdp:blockdiag}
\end{equation}
where each ``block'' is defined by a continuously differentiable function $G_{\ell}\colon \R^n\to \S^{m_\ell}$, with $\ell\in \{1,\ldots,q\}$, and $m_1+\ldots+m_q=m$. In fact, let $\xb\in \F$ and $r_\ell\doteq \rank(G_\ell(\xb))$ for each $\ell$; and, for simplicity, let us assume that $r_\ell<m_\ell$ for all $\ell$.
Since $\Ker G(\xb)=\Ker G_1(\xb)\times\ldots\times \Ker G_q(\xb)$, then $r=r_1+\ldots+r_q$.  Then, weak-nondegeneracy (respectively, weak-Robinson's CQ) holds at $\xb$ if, and only if, for all sequences $\seq{x}\to \xb$, there are sequences of matrices $\{E^k_\ell\}_{k\in \N}$ such that:
\begin{itemize}
\item The columns of $E^k_\ell$ are unitary eigenvectors associated with the $m_\ell-r_\ell$ smallest eigenvalues of $G_\ell(x^k)$, for each $k\in \N$ and each $\ell\in \{1,\ldots,q\}$;
\item There are limit points $\bar{E}_\ell$ of $\{E^k_\ell\}_{k\in \N}$, $\ell\in\{1,\ldots,q\}$, such that the set
\[
	\bigcup_{\ell=1}^q \left\{ v_{ii}^{\ell}(\xb,\bar{E}_\ell) \colon i\in \{1,\ldots,m_\ell-r_\ell\} \right\}
\]
is (positive) linearly independent, where
\begin{equation}\label{block:viidef}
	v_{ij}^\ell(\xb,\bar{E}_\ell)\doteq \left[\bar{e}_{\ell,i}^\T D_{x_1}G_\ell(\xb) \bar{e}_{\ell,j} \ , \ \ldots \ , \ \bar{e}_{\ell,i}^\T D_{x_n}G_\ell(\xb) \bar{e}_{\ell,j}\right]^\T,
\end{equation}
and $\bar{e}_{\ell,1},\ldots,\bar{e}_{\ell,m_\ell-r_\ell}$ denote the columns of $\bar{E}_\ell$, for each $\ell\in \{1,\ldots,q\}$.
\end{itemize}
The proof of this fact is elementary with~\cite[Lem. 1.3.10]{hornjohnson} and~\eqref{def:vii} at hand. Moreover, note that this is precisely the way weak-nondegeneracy would be defined for an equivalent multifold NSDP with constraints $G_1(x)\succeq 0, \ldots, G_q(x)\succeq 0$. Thus, weak-nondegeneracy and weak-Robinson's CQ are invariant to block diagonal and multifold representations of \eqref{sdp:blockdiag}. This is especially meaningful in problems that do not present an explicit block-diagonal representation, in which case it is not necessary to have prior knowledge of such a representation to talk about weak-nondegeneracy (or weak-Robinson's CQ), contrary to nondegeneracy.
\end{remark}

Recall that the analysis we presented until this point showed, among other things, that some choices of $\bar{E}$ may be more meaningful than others. With this in mind, we are now led to revisit the work of Forsgren~\cite{Forsgren2000}, who presented a very interesting way of talking about nondegeneracy in the presence of any sparsity structure that appears after applying a particular transformation to the problem. In the next section, we improve some of Forsgren's ideas by presenting a simplified and more general way of dealing with sparsity.

\section{Dealing with structural sparsity}\label{sec:sndg}

In this section, we take inspiration from a regularity condition introduced by Forsgren~\cite[Sect. 2.3]{Forsgren2000}, whose primary goal was to prove second-order optimality conditions for \eqref{NSDP}. However, what makes Forsgren's condition specially interesting for us is the fact it can benefit from some sparsity structure of a certain Schur complement related to the constraint function. The main objective of this section is to present a more straightforward way of enjoying sparsity, based on Forsgren's results and Section~\ref{sec:ndg}. But before that, we present some of the notation used by Forsgren.

Given a point $\xb$ and a matrix-valued function $F\colon \R^n\to \S^\beta$, consider the set $\mathcal{S}(F,\bar{x})$ defined as follows: 
\[
	\begin{array}{ll}
		\mathcal{S}(F,\xb)&\doteq\left\{M\in \S^\beta\colon M_{ij}=0 \text{ if } F_{ij}(x) \text{ is structurally zero near }\xb\right\}\\
		& =\left\{M\in \S^\beta\colon M_{ij}=0 \text{ if } \exists \varepsilon>0 \text{ such that } F_{ij}(x)=0,\forall x\in B(\xb,\varepsilon)\right\}.
	\end{array}
\]
For example, if $\beta=3$ and for all $x$ close to $\xb$, we are able to identify non trivial mappings $F_{ij}$ such that
\begin{equation*}
	F(x)=
	\begin{bmatrix}
		F_{11}(x) & 0 & F_{13}(x)\\
		0 & F_{22}(x) & 0\\
		F_{13}(x) & 0 & F_{33}(x)
	\end{bmatrix},
	\textnormal{ then } 
	M\in \mathcal{S}(F,\xb)
	\Leftrightarrow
	M=
	\begin{bmatrix}
		M_{11} & 0 & M_{13}\\
		0 & M_{22} & 0\\
		M_{13} & 0 & M_{33}
	\end{bmatrix},
\end{equation*}
where $M_{11}, M_{13}, M_{22},$ and $M_{33}$ may or may not be zero.
Also, we define
\[
	\mathcal{I}(F,\xb)\doteq
	\left\{(i,j) \colon \forall \varepsilon>0, \exists x\in B(\xb,\varepsilon) \text{ such that } F_{ij}(x)\neq 0, \ 1\leqslant i\leqslant j\leqslant \beta\right\}
\]
as the set of indices that define the elements of $\mathcal{S}(F,\xb)$. 

Forsgren's results are obtained in terms of the function
\[
	\tilde{G}(x)\doteq G(x)-G(x)\bar{P}(\bar{P}^\top G(x)\bar{P})^{-1}\bar{P}^\top G(x),
\]
where $\bar{U}=[\bar{P},\bar{E}]$ has columns that form an orthonormal eigenvector basis for $G(\xb)$, such that $\bar{E}$ spans the kernel of $G(\xb)$ and $\bar{P}^\T G(\xb)\bar{P}\succ 0.$ Note that $\bar{E}^\top \tilde{G}(x)\bar{E}$ is the Schur complement of $\bar{P}^\top{G}(x)\bar{P}$ inside
\[
	\bar{U}^\T G(x) \bar{U}=
	\begin{bmatrix}
		\bar{P}^\T G(x)\bar{P} & \bar{P}^\T G(x)\bar{E}\\
		\bar{E}^\T G(x)\bar{P} & \bar{E}^\T G(x)\bar{E}
	\end{bmatrix}.
\]	
Moreover, following Forsgren~\cite[Lem. 1]{Forsgren2000}, we see that $\tilde{G}(x)\succeq 0$ if, and only if $G(x)\succeq 0$, for all $x$ sufficiently close to $\xb$, so the original NSDP problem can be locally reformulated as a minimization problem over $\tilde{G}(x)\succeq 0$, around $\xb$. In fact, since
\begin{align*}
	\bar{P}(\bar{P}^\top G(\xb)\bar{P})^{-1}\bar{P}^\top & =\bar{P}\lambda_+(G(\xb))^{-1}\bar{P}^\T \\
	& = \bar{U}
	\begin{bmatrix}
		\lambda_+(G(\xb))^{-1} & 0\\
		0 & 0
	\end{bmatrix}		
	\bar{U}^\T\\
	& =G(\xb)^{\dag},
\end{align*}
where $G(\xb)^{\dag}$ is the Moore-Penrose pseudoinverse of $G(\xb)$, it follows that $\tilde{G}(\xb)=0$ \cite[Lem. 2]{Forsgren2000}, so $\tilde{G}$ can be considered a reduction to the kernel of $G(\xb)$ near $\xb$. 

The regularity condition introduced by Forsgren is as follows:
\begin{definition}[Forsgren's CQ]
	Let $\xb\in \F$ and let $\bar{U}\doteq[\bar{P},\bar{E}]$ be an orthogonal matrix that diagonalizes $G(\xb)$, such that the columns of $\bar{E}$ span $\Ker G(\xb)$. Then, \emph{Forsgren's CQ} holds at $\xb$ with respect to $\bar{U}$ when
	\begin{equation}\label{eq:forsgren1}
		\textnormal{span}\left\{\bar{E}^\top D_{x_i}G(\xb)\bar{E} \colon i\in \{1,\dots,n\}\right\}=\bar{E}^\top \mathcal{S}(\tilde{G},\xb) \bar{E}
		\tag{F1}
	\end{equation}
	and
	\begin{equation}\label{eq:forsgren2}
	\exists M\in \bar{E}^\top \mathcal{S}(\tilde{G},\xb) \bar{E}, \ \textnormal{ such that } \ M\succ 0.
	\tag{F2}
	\end{equation}
\end{definition}
Forsgren's CQ is indeed a constraint qualification, for when \eqref{eq:forsgren1} holds, then \eqref{eq:forsgren2} is equivalent to Robinson's CQ~\cite[Lem. 5]{Forsgren2000}. However, although Forsgren states that any choice of $\bar{U}$ leads to a valid CQ, there is no discussion on the effects of this choice over the condition proposed. Under a specific condition, Forsgren's CQ provides uniqueness of the Lagrange multiplier~\cite[Thm. 1]{Forsgren2000}, but this condition varies with $\bar{U}$. Thus, different choices of $\bar{U}$ are likely to define different variants of Forsgren's CQ. This is not necessarily a negative point, but a comparison among those variants would be appropriate. For instance, from the practical point of view, one may be interested in knowing which choice of $\bar{U}$ defines the weakest CQ, or which one is easier to compute.

A result from Dorsch, G{\'o}mez, and Shikhman~\cite{Dorsch2016} shows that, ignoring the sparsity treatment, \eqref{eq:forsgren1} becomes equivalent to nondegeneracy.
\begin{lemma}[Lem. 5 from~\cite{Dorsch2016}]\label{lem:fcqndg}
	Let $\xb\in \F$ and assume that $\mathcal{S}(\tilde{G},\xb)=\sym$. Then, condition \eqref{eq:forsgren1} of Forsgren's CQ holds if, and only if, nondegeneracy holds at $\xb$.
\end{lemma}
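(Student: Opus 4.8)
The plan is to reduce condition \eqref{eq:forsgren1} directly to the surjectivity of the operator $\psi_{\xb}$ from \eqref{def:psi_trans}, which is already known to characterize nondegeneracy, so that both implications of the lemma fall out simultaneously. First I would dispose of the trivial case $r=m$: then $\Ker G(\xb)=\{0\}$, the matrix $\bar E$ has no columns, both sides of \eqref{eq:forsgren1} collapse to $\S^0=\{0\}$, and nondegeneracy holds by definition. Hence I assume $r<m$ throughout the rest.

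The heart of the argument consists of two short identifications. On the left-hand side, since $\psi_{\xb}(d)=\sum_{i=1}^n d_i\,\bar E^\top D_{x_i}G(\xb)\bar E$ by the definition \eqref{def:psi_trans} together with the linearity of $DG(\xb)$, the image of $\psi_{\xb}$ is
\[
	\Im\psi_{\xb}=\textnormal{span}\left\{\bar E^\top D_{x_i}G(\xb)\bar E\colon i\in\{1,\dots,n\}\right\},
\]
which is exactly the left-hand side of \eqref{eq:forsgren1}. On the right-hand side, under the hypothesis $\mathcal{S}(\tilde G,\xb)=\sym$, I would show that the congruence map $M\mapsto\bar E^\top M\bar E$ sends $\sym$ onto $\S^{m-r}$: given any $N\in\S^{m-r}$, the matrix $\bar E N\bar E^\top\in\sym$ satisfies $\bar E^\top(\bar E N\bar E^\top)\bar E=N$ because $\bar E^\top\bar E=\I_{m-r}$. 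Therefore $\bar E^\top\mathcal{S}(\tilde G,\xb)\bar E=\bar E^\top\sym\bar E=\S^{m-r}$.

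Combining the two identifications, condition \eqref{eq:forsgren1} reads $\Im\psi_{\xb}=\S^{m-r}$, that is, $\psi_{\xb}$ is surjective. By the equivalence recalled just before Proposition~\ref{prop:ndgvii} — namely, that for $r<m$ nondegeneracy is equivalent to surjectivity of $\psi_{\xb}$ — this is precisely nondegeneracy at $\xb$, which closes both directions at once.

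I do not expect a genuine obstacle, since the previously stated equivalence between nondegeneracy and surjectivity of $\psi_{\xb}$ carries essentially the entire weight of the argument; the only fact requiring verification is the surjectivity of the congruence $M\mapsto\bar E^\top M\bar E$, which is elementary. The one point worth stating with care is that the conclusion is independent of the particular $\bar E$ (equivalently $\bar U$) selected: any two matrices spanning $\Ker G(\xb)$ differ by right-multiplication by an orthogonal $Q\in\R^{(m-r)\times(m-r)}$, and $N\mapsto Q^\top N Q$ is a bijection of $\S^{m-r}$, so surjectivity of $\psi_{\xb}$ — and hence validity of \eqref{eq:forsgren1} — is unaffected by the choice.
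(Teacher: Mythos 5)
Your proof is correct. Note that the paper itself does not prove this statement — it is quoted as Lemma 5 of Dorsch, G\'omez, and Shikhman~\cite{Dorsch2016} — but your reduction is exactly the natural one given the machinery the paper sets up: identifying the left-hand side of \eqref{eq:forsgren1} with $\Im\psi_{\xb}$ for the operator \eqref{def:psi_trans}, observing that $\bar{E}^\top\sym\bar{E}=\S^{m-r}$ via $N\mapsto\bar{E}N\bar{E}^\top$, and invoking the surjectivity characterization of nondegeneracy recalled just before Proposition~\ref{prop:ndgvii}; the closing remark on independence of the choice of $\bar{E}$ is also handled correctly.
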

%
%

However, similarly to weak-nondegeneracy, Forsgren's CQ also reduces to LICQ from NLP when $G$ is structurally diagonal (as in~\eqref{sdp:diagg}), contrasting with nondegeneracy. To put Forsgren's CQ in the same terms as the previous sections, we present an elementary characterization of it using the vectors $v_{ij}(\xb,\bar{E})$ defined in Proposition~\ref{prop:ndgvii}:

\begin{proposition}\label{prop:fcqli}
	Let $\xb\in \F$ and let $\bar{E}\in \R^{m\times m-r}$ span $\Ker G(\xb)$. Then, condition \eqref{eq:forsgren1} of Forsgren's CQ 
	holds at $\xb$ if, and only if,
	\[
		\sum_{i=1}^{m-r} \sum_{j=i}^{m-r} M_{ij} v_{ij}(\xb,\bar{E})=0, \quad M\in \bar{E}^\T\mathcal{S}(\tilde{G},\xb)\bar{E} \quad \Rightarrow \quad M=0,
	\]
	where $r= \rank(G(\xb))$.
\end{proposition}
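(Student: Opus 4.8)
The plan is to show that both the condition stated in the proposition and condition~\eqref{eq:forsgren1} are equivalent to a single subspace identity, namely $\mathcal{L}=V$, where $\mathcal{L}\doteq\textnormal{span}\{\bar{E}^\top D_{x_\ell}G(\xb)\bar{E}\colon \ell\in\{1,\dots,n\}\}$ and $V\doteq\bar{E}^\top\mathcal{S}(\tilde{G},\xb)\bar{E}$. The first step is to record that the inclusion $\mathcal{L}\subseteq V$ holds unconditionally, so that \eqref{eq:forsgren1} is \emph{by definition} nothing but the reverse inclusion, i.e. the equality $\mathcal{L}=V$. To see $\mathcal{L}\subseteq V$, write $\tilde{G}=G-R$ with $R(x)\doteq G(x)\bar{P}(\bar{P}^\top G(x)\bar{P})^{-1}\bar{P}^\top G(x)$, and differentiate: every term of $D_{x_\ell}R(\xb)$ carries a factor $G(\xb)\bar{E}=0$ or $\bar{E}^\top G(\xb)=0$, so $\bar{E}^\top D_{x_\ell}R(\xb)\bar{E}=0$ and hence $\bar{E}^\top D_{x_\ell}G(\xb)\bar{E}=\bar{E}^\top D_{x_\ell}\tilde{G}(\xb)\bar{E}$. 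Since any entry of $\tilde{G}$ that is structurally zero near $\xb$ has a vanishing partial derivative at $\xb$, we get $D_{x_\ell}\tilde{G}(\xb)\in\mathcal{S}(\tilde{G},\xb)$, whence $\bar{E}^\top D_{x_\ell}G(\xb)\bar{E}\in V$ and $\mathcal{L}\subseteq V$.

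Next I would recast the implication in the statement. Setting $A^\ell\doteq\bar{E}^\top D_{x_\ell}G(\xb)\bar{E}$ and using~\eqref{def:vii}, the $\ell$-th coordinate of $\sum_{i\le j}M_{ij}v_{ij}(\xb,\bar{E})$ equals $\sum_{i\le j}M_{ij}A^\ell_{ij}$. This is precisely $\langle A^\ell,M\rangle_{\triangle}$ for the upper-triangular bilinear form $\langle M,N\rangle_{\triangle}\doteq\sum_{i\le j}M_{ij}N_{ij}$ on $\S^{m-r}$, which is a genuine inner product because $\langle M,M\rangle_{\triangle}=\sum_{i\le j}M_{ij}^2$ vanishes only for $M=0$. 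Consequently the stated implication asserts exactly that $V\cap\mathcal{L}^{\perp_{\triangle}}=\{0\}$, where $\mathcal{L}^{\perp_{\triangle}}$ is the orthogonal complement of $\mathcal{L}=\textnormal{span}\{A^\ell\colon \ell\in\{1,\dots,n\}\}$ with respect to $\langle\cdot,\cdot\rangle_{\triangle}$.

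The final step is an elementary observation valid for any inner product: since $\mathcal{L}\subseteq V$, the orthogonal complement of $\mathcal{L}$ inside the inner-product space $(V,\langle\cdot,\cdot\rangle_{\triangle})$ is exactly $V\cap\mathcal{L}^{\perp_{\triangle}}$, and $V=\mathcal{L}\oplus(V\cap\mathcal{L}^{\perp_{\triangle}})$. Hence $V\cap\mathcal{L}^{\perp_{\triangle}}=\{0\}$ if and only if $\mathcal{L}=V$. Chaining this with the first two steps gives \eqref{eq:forsgren1} $\Leftrightarrow$ $\mathcal{L}=V$ $\Leftrightarrow$ the stated implication, which is the desired equivalence.

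The step I expect to require the most care is the passage from the summation $\sum_{i\le j}M_{ij}v_{ij}(\xb,\bar{E})$ to a clean orthogonality statement, because this summation pairs $M$ with $A^\ell$ through the upper-triangular form $\langle\cdot,\cdot\rangle_{\triangle}$ rather than the Frobenius inner product that naturally governs $\Im\psi_{\xb}$ and \eqref{eq:forsgren1}; the two differ by a factor of two on off-diagonal entries. The conceptual resolution---and the crux of the argument---is that the equivalence ``$\mathcal{L}\subseteq V$ together with $V\cap\mathcal{L}^{\perp}=\{0\}$ iff $\mathcal{L}=V$'' holds for \emph{every} inner product, so one may freely use $\langle\cdot,\cdot\rangle_{\triangle}$ in place of the Frobenius product and no rescaling of $M$ is required.
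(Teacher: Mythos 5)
Your proof is correct and follows essentially the same route as the paper's: both arguments reduce condition \eqref{eq:forsgren1} to the subspace identity $\mathcal{L}=V$ (with $\mathcal{L}=\textnormal{span}\{\bar{E}^\T D_{x_\ell}G(\xb)\bar{E}\}$ and $V=\bar{E}^\T\mathcal{S}(\tilde{G},\xb)\bar{E}$, the inclusion $\mathcal{L}\subseteq V$ coming from Forsgren's identity $\bar{E}^\T D_{x_\ell}G(\xb)\bar{E}=\bar{E}^\T D_{x_\ell}\tilde{G}(\xb)\bar{E}$, which you reprove inline) and reduce the stated implication to $V\cap\mathcal{L}^{\perp}=\{0\}$, the two being equivalent by orthogonal decomposition of $V$. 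Your explicit use of the upper-triangular inner product $\langle\cdot,\cdot\rangle_{\triangle}$ is a welcome refinement: the paper's final step silently identifies the Frobenius pairing $\langle \bar{E}^\T D_{x_\ell}G(\xb)\bar{E},M\rangle$, which doubles the off-diagonal contributions, with the sum $\sum_{i\leqslant j}M_{ij}(v_{ij}(\xb,\bar{E}))_{\ell}$, and your observation that the decomposition argument works for any inner product is exactly what justifies that identification.
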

\begin{proof}
	Let us assume that $r<m$, since otherwise the proof is trivial. We employ~\cite[Lem. 2]{Forsgren2000}, which states that \jo{\[}\si{$}\bar{E}^\top D_{x_i}G(\xb)\bar{E}=\bar{E}^\top D_{x_i}\tilde{G}(\xb)\bar{E}\si{$}\jo{\]} for all $i\in \{1,\ldots,n\}$, to ensure that the linear operator 
$\psi:\R^n\to \bar{E}^\top \mathcal{S}(\tilde{G},\bar{x}) \bar{E}$,
defined by the action $\psi(d)\doteq \bar{E}^\top DG(\xb)[d]\bar{E}$ is well-defined. 

With this in mind, note that 
	\[
		\Im(\psi)=\textnormal{span}\left\{\bar{E}^\top D_{x_i}G(\xb)\bar{E} \colon i\in \{1,\dots,n\}\right\}=\bar{E}^\top \mathcal{S}(\tilde{G},\bar{x}) \bar{E}
	\]
 if, and only if,
	\begin{equation}
		\begin{aligned}
		\Ker(\psi^*) & =\left\{M\in \bar{E}^\T\mathcal{S}(\tilde{G},\xb)\bar{E}\colon  \langle \bar{E}^\top D_{x_\ell}G(\xb)\bar{E}, M\rangle=0, \ \forall \ell\in \{1,\dots,n\}\right\} \jo{ \\ & }=\{0\},
		\end{aligned}
	\end{equation}
	whence the result follows since 
	\[
		\bar{E}^\top D_{x_\ell}G(\xb)\bar{E}=[(v_{ij}(\xb,\bar{E}))_{\ell}]_{i,j\in \{1,\dots,m-r\}},
	\]
	where $(v_{ij}(\xb,\bar{E}))_{\ell}$ is the $\ell$-th entry of the vector $v_{ij}(\xb,\bar{E})$.
		\jo{\hfill\qed}
\end{proof}
As an abuse of language, \eqref{eq:forsgren1} consists of the ``linear independence'' of \si{$}\jo{\[}\left\{v_{ij}(\xb,\bar{E})\colon 1\leqslant i\leqslant j\leqslant m-r\right\}\si{$}\jo{\]} with respect to the set $\bar{E}^\T\mathcal{S}(\tilde{G},\xb)\bar{E}$. In particular, when $G(\xb)=0$ and \eqref{eq:forsgren2} holds, take $\bar{U}=\bar{E}=\I_m$ and note that Forsgren's CQ holds for this particular choice of $\bar{U}$ if, and only if, the set $\left\{\nabla G_{ij}(\xb)\colon (i,j)\in \mathcal{I}(G,\xb)\right\}$ is linearly independent, with $(i,i)\in \mathcal{I}(G,\xb)$ for all $i\in\{1,\dots,m\}$.

\begin{remark}
As far as we understand, the relation between Forsgren's CQ and nondegeneracy was not formally established in~\cite{Forsgren2000}. To clarify this important detail, note that it is clear from Propositions~\ref{prop:fcqli} and~\ref{prop:ndgvii} that nondegeneracy implies Forsgren's CQ. Moreover this implication is clearly strict, as nondegeneracy does not recover LICQ in a diagonal example. 
\end{remark}

The above discussion leads us to deal with sparsity in a more straightforward way, namely without taking Schur complements, which induces another weak variant of nondegeneracy.

\subsection{A sparse variant of nondegeneracy}

For any matrix $\bar{E}$ that spans $\Ker G(\xb)$, consider the function 
\[
	\widehat{G}^{\bar{E}}(x)\doteq \bar{E}^\T G(x)\bar{E}
\]
and note that $\nabla \widehat{G}^{\bar{E}}_{ij}(\xb)=v_{ij}(\xb,\bar{E})$ for all $i,j\in \{1,\ldots,m-r\}$ with $i\leqslant j$. We incorporate structural sparsity into nondegeneracy directly, but in a similar style of Forsgren's CQ (as characterized in~Proposition~\ref{prop:fcqli}), to introduce a new constraint qualification.
\begin{definition}[Sparse-nondegeneracy]\label{def:sndg}
We say that \emph{sparse-nondegeneracy} holds at $\xb\in \F$ when either $\Ker G(\xb)=\{0\}$ or there exists a matrix $\bar{E}\in\R^{m\times m-r}$ that spans $\Ker G(\xb)$ and such that:
\begin{enumerate}
	\item The set $\left\{v_{ij}(\xb,\bar{E})\colon (i,j)\in \mathcal{I}(\widehat{G}^{\bar{E}},\xb), 1\leqslant i\leqslant j\leqslant m-r\right\}$ is linearly independent;
	\item $(i,i)\in \mathcal{I}(\widehat{G}^{\bar{E}},\xb)$ for all $i\in \{1,\ldots,m-r\}$.
\end{enumerate}
\end{definition}

There are two natural questions about sparse-nondegeneracy that we shall answer in the following paragraphs. The first one consists of knowing whether the sparse-nondegeneracy condition is a genuine constraint qualification; and the second one concerns about the relation between Definition~\ref{def:sndg} and other constraint qualifications, such as nondegeneracy, Forsgren's CQ, and Robinson's CQ. To address these questions, we first prove an elementary characterization of sparse-nondegeneracy:
\begin{lemma}\label{lem:sndg}
	Let $\xb\in \F$ be such that $\Ker G(\xb)\neq \{0\}$, and let $\bar{E}$ be a matrix that spans $\Ker G(\xb)$. Then, item 1 of Definition~\ref{def:sndg} holds at $\xb$ if, and only if, there is no nonzero $\tilde{Y}\in \mathcal{S}(\widehat{G}^{\bar{E}},\xb)$ such that $DG(\xb)^*[\bar{E}\tilde{Y}\bar{E}^\T]=0$.
\end{lemma}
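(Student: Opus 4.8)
The plan is to unwind both sides of the claimed equivalence into statements about a single linear map and recognize that item 1 of Definition~\ref{def:sndg} is exactly the injectivity of a suitable adjoint. The key observation is that the vectors $v_{ij}(\xb,\bar{E})$ are the gradients $\nabla \widehat{G}^{\bar{E}}_{ij}(\xb)$, so assembling them indexed over $\mathcal{I}(\widehat{G}^{\bar{E}},\xb)$ with $i \leqslant j$ is precisely the matrix representation of a linear map. First I would consider the linear operator $\Phi\colon \mathcal{S}(\widehat{G}^{\bar{E}},\xb) \to \R^n$ defined by $\Phi[\tilde{Y}] \doteq \sum_{i \leqslant j} \tilde{Y}_{ij} v_{ij}(\xb,\bar{E})$, where the sum ranges over the index set $\mathcal{I}(\widehat{G}^{\bar{E}},\xb)$ with $1 \leqslant i \leqslant j \leqslant m-r$ (using symmetry to collapse the off-diagonal pairs and account for the factor of $2$ from the $(i,j)$ and $(j,i)$ entries appropriately).

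Next I would verify the two translations. On one hand, item 1 of Definition~\ref{def:sndg} is, by definition, the linear independence of the family $\{v_{ij}(\xb,\bar{E}) : (i,j) \in \mathcal{I}(\widehat{G}^{\bar{E}},\xb), i \leqslant j\}$, which is equivalent to the injectivity of $\Phi$, i.e. to $\Phi[\tilde{Y}] = 0 \Rightarrow \tilde{Y} = 0$ for $\tilde{Y} \in \mathcal{S}(\widehat{G}^{\bar{E}},\xb)$. On the other hand, using the identity $v_{ij}(\xb,\bar{E}) = DG(\xb)^*\left[\frac{\bar{e}_i\bar{e}_j^\T + \bar{e}_j\bar{e}_i^\T}{2}\right]$ from~\eqref{def:vii}, and the fact that for a symmetric matrix $\tilde{Y} \in \mathcal{S}(\widehat{G}^{\bar{E}},\xb)$ one has $\bar{E}\tilde{Y}\bar{E}^\T = \sum_{i,j} \tilde{Y}_{ij} \bar{e}_i \bar{e}_j^\T = \sum_{i \leqslant j} \tilde{Y}_{ij}(\bar{e}_i\bar{e}_j^\T + \bar{e}_j\bar{e}_i^\T)$ after symmetrizing, linearity of $DG(\xb)^*$ gives $DG(\xb)^*[\bar{E}\tilde{Y}\bar{E}^\T] = \Phi[\tilde{Y}]$ up to the bookkeeping of diagonal versus off-diagonal coefficients. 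Hence $DG(\xb)^*[\bar{E}\tilde{Y}\bar{E}^\T] = 0$ with $\tilde{Y} \in \mathcal{S}(\widehat{G}^{\bar{E}},\xb)$ nonzero is exactly the negation of injectivity of $\Phi$.

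Combining the two translations yields the equivalence immediately: item 1 holds $\iff$ $\Phi$ is injective $\iff$ there is no nonzero $\tilde{Y} \in \mathcal{S}(\widehat{G}^{\bar{E}},\xb)$ with $DG(\xb)^*[\bar{E}\tilde{Y}\bar{E}^\T] = 0$. The one subtlety I would be careful about is the role of $\mathcal{S}(\widehat{G}^{\bar{E}},\xb)$ and $\mathcal{I}(\widehat{G}^{\bar{E}},\xb)$: a matrix $\tilde{Y}$ ranging over $\mathcal{S}(\widehat{G}^{\bar{E}},\xb)$ has its support confined to the index pairs in $\mathcal{I}(\widehat{G}^{\bar{E}},\xb)$, so the expansion $\bar{E}\tilde{Y}\bar{E}^\T = \sum \tilde{Y}_{ij}(\cdots)$ automatically restricts to those pairs and matches the index set appearing in item 1 of the definition. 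The main obstacle is therefore not conceptual but careful bookkeeping: making sure the symmetrization factor $\frac{1}{2}$ in~\eqref{def:vii}, the doubling of off-diagonal entries in $\bar{E}\tilde{Y}\bar{E}^\T$, and the restriction to $i \leqslant j$ are tracked consistently so that $\Phi$ genuinely coincides with $\tilde{Y} \mapsto DG(\xb)^*[\bar{E}\tilde{Y}\bar{E}^\T]$; once that identity is established, linear independence and injectivity are interchangeable by definition and the proof closes.
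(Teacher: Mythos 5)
Your proposal is correct and follows essentially the same route as the paper: the paper's proof consists precisely of the identity $\sum_{(i,j)\in\mathcal{I}(\widehat{G}^{\bar{E}},\xb)} \tilde{Y}_{ij}\, v_{ij}(\xb,\bar{E}) = DG(\xb)^*[\bar{E}\tilde{Y}\bar{E}^\T]$ for $\tilde{Y}\in\mathcal{S}(\widehat{G}^{\bar{E}},\xb)$, from which the equivalence between linear independence and the triviality of the kernel is immediate. Your extra care with the symmetrization factor and the doubling of off-diagonal entries is sound (and harmless, since positive rescaling of coefficients does not affect the kernel), so nothing further is needed.
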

\begin{proof}
The result follows directly by noticing that

\begin{equation}
	\begin{aligned}
		\sum_{(i,j)\in\mathcal{I}(\widehat{G}^{\bar{E}},\xb)} v_{ij}(\xb,\bar{E})\tilde{Y}_{ij}
		 & = DG(\xb)^*[\bar{E}\tilde{Y}\bar{E}^\T] 
	\end{aligned}
\end{equation}
for every $\tilde{Y}\in \mathcal{S}(\widehat{G}^{\bar{E}},\xb)$.
	\jo{\hfill\qed}
\end{proof}
Next, we prove that sparse-nondegeneracy implies Robinson's CQ, which also shows that it is indeed a constraint qualification. 
\begin{proposition}\label{prop:sndgrob}
If $\xb\in \F$ satisfies sparse-nondegeneracy, then it also satisfies Robinson's CQ.
\end{proposition}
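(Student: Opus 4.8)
The plan is to prove the contrapositive via the dual form of Robinson's CQ stated in~\eqref{eq:robinsondual}. That is, I will assume that Robinson's CQ fails at $\xb$ and show that sparse-nondegeneracy must also fail. If $\Ker G(\xb)=\{0\}$, then Robinson's CQ holds trivially (the only $Y\succeq 0$ complementary to $G(\xb)$ is $Y=0$), so I may assume $r<m$ and work with a fixed matrix $\bar{E}$ that spans $\Ker G(\xb)$. By the failure of Robinson's CQ, there exists a nonzero $Y\succeq 0$ with $DG(\xb)^*[Y]=0$ and $\langle G(\xb),Y\rangle=0$. As recalled in the discussion around~\eqref{def:bloques_Y}, complementarity forces $Y$ to have the block form $Y=\bar{E}\tilde{Y}\bar{E}^\T$ for some $\tilde{Y}\in \S^{m-r}_+$, and since $Y\neq 0$ we have $\tilde{Y}\neq 0$.

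Next I would invoke Lemma~\ref{lem:sndg}, which characterizes item~1 of sparse-nondegeneracy as the nonexistence of a nonzero $\tilde{Y}\in \mathcal{S}(\widehat{G}^{\bar{E}},\xb)$ with $DG(\xb)^*[\bar{E}\tilde{Y}\bar{E}^\T]=0$. The main obstacle is that the $\tilde{Y}$ produced by the failure of Robinson's CQ need not belong to the sparsity pattern $\mathcal{S}(\widehat{G}^{\bar{E}},\xb)$; its entries at structurally-zero positions of $\widehat{G}^{\bar{E}}$ could be nonzero. The key observation to overcome this is that for indices $(i,j)\notin \mathcal{I}(\widehat{G}^{\bar{E}},\xb)$, the entry $\widehat{G}^{\bar{E}}_{ij}(x)=\bar{e}_i^\T G(x)\bar{e}_j$ vanishes identically near $\xb$, so its gradient $v_{ij}(\xb,\bar{E})=\nabla \widehat{G}^{\bar{E}}_{ij}(\xb)$ is zero. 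Consequently, in the expansion $DG(\xb)^*[\bar{E}\tilde{Y}\bar{E}^\T]=\sum_{i\leqslant j} v_{ij}(\xb,\bar{E})\tilde{Y}_{ij}$ (up to the symmetric normalization of~\eqref{def:vii}), the contributions from indices outside $\mathcal{I}(\widehat{G}^{\bar{E}},\xb)$ simply drop out. Therefore the truncation $\tilde{Y}'$ obtained by zeroing out all entries of $\tilde{Y}$ at positions not in $\mathcal{I}(\widehat{G}^{\bar{E}},\xb)$ satisfies $\tilde{Y}'\in \mathcal{S}(\widehat{G}^{\bar{E}},\xb)$ and $DG(\xb)^*[\bar{E}\tilde{Y}'\bar{E}^\T]=DG(\xb)^*[\bar{E}\tilde{Y}\bar{E}^\T]=0$.

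The remaining delicate point is to guarantee that $\tilde{Y}'\neq 0$, so that Lemma~\ref{lem:sndg} yields the failure of item~1. Here I would use item~2 of sparse-nondegeneracy, which requires $(i,i)\in \mathcal{I}(\widehat{G}^{\bar{E}},\xb)$ for all $i$: this means that no diagonal entry gets truncated, so all diagonal entries of $\tilde{Y}$ survive in $\tilde{Y}'$. Since $\tilde{Y}\in \S^{m-r}_+$ is a nonzero positive semidefinite matrix, its trace is strictly positive, hence at least one diagonal entry $\tilde{Y}_{ii}>0$; this entry is retained in $\tilde{Y}'$, so $\tilde{Y}'\neq 0$. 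Thus, assuming item~2 holds (which is part of sparse-nondegeneracy), the existence of the nonzero $\tilde{Y}'\in \mathcal{S}(\widehat{G}^{\bar{E}},\xb)$ in the kernel of $\tilde{Y}\mapsto DG(\xb)^*[\bar{E}\tilde{Y}\bar{E}^\T]$ contradicts item~1 via Lemma~\ref{lem:sndg}. Since this holds for the arbitrary spanning matrix $\bar{E}$ and sparse-nondegeneracy only asks for the existence of one good $\bar{E}$, I conclude that the failure of Robinson's CQ forces the failure of sparse-nondegeneracy for every $\bar{E}$, completing the contrapositive.
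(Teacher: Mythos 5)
Your proposal is correct and follows essentially the same route as the paper's proof: both pass to the dual form~\eqref{eq:robinsondual}, write the multiplier as $\bar{E}\tilde{Y}\bar{E}^\T$, truncate to the sparsity pattern $\mathcal{S}(\widehat{G}^{\bar{E}},\xb)$ (using that gradients of structurally zero entries vanish), invoke Lemma~\ref{lem:sndg}, and use item~2 together with the fact that a nonzero positive semidefinite matrix has a positive diagonal entry. The only difference is that you argue by contrapositive while the paper argues directly, which is a purely presentational distinction.
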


\begin{proof}
The result follows trivially when $\Ker G(\xb)=\{0\}$, so let us assume that $r= \rank(G(\xb))<m$. Suppose that sparse-nondegeneracy holds at $\xb\in \F$, and take any $Z\succeq 0$ such that $\langle Z,G(\xb)\rangle=0$ and $DG(\xb)^*[Z]=0$, then there exists some $Y\in \S^{m-r}_+$ such that $Z=\bar{E} Y \bar{E}^\T$. Define the matrix $\tilde{Y}\in \mathcal{S}(\widehat{G}^{\bar{E}},\xb)$ whose $(i,j)$-th entry is given by
\[
	\tilde{Y}_{ij}\doteq
	\left\{
	\begin{aligned}
		Y_{ij}, & \  \textnormal{ if } (i,j)\in\mathcal{I}(\widehat{G}^{\bar{E}},\xb)\\
		0, & \ \textnormal{ otherwise,}
	\end{aligned}
	\right.
\]
and note that
\begin{equation}
	\begin{aligned}
		DG(\xb)^*[Z] &= DG(\xb)^*[\bar{E}Y\bar{E}^\T] = DG(\xb)^*[\bar{E}\tilde{Y}\bar{E}^\T]
		 = 0,
	\end{aligned}
\end{equation}
so $\tilde{Y}=0$ due to Lemma~\ref{lem:sndg}. Moreover, from item 2 of Definition~\ref{def:sndg}, $(i,i)\in \mathcal{I}(\widehat{G}^{\bar{E}},\xb)$ for all $i\in\{1,\ldots,m-r\}$, so the diagonal of $Y$ must consist only of zeros, which implies that $Y=0$ and, consequently, $Z=0$. Since $Z$ is arbitrary, Robinson's CQ holds.
	\jo{\hfill\qed} 
\end{proof}

We highlight that item 2 of Definition~\ref{def:sndg} is not superfluous, for removing it may cause us to lose the property of being a constraint qualification. Indeed, the following example illustrates that:

\begin{example}
\label{example:facial}
Consider the problem:
\begin{equation*}
  \begin{aligned}
    & \underset{x \in \mathbb{R}^{2}}{\text{Minimize}}
    & & x_2, \\
    & \text{subject to}
    & & G(x)\doteq \begin{bmatrix}
		x_1 & x_2\\
		x_2 & 0
	\end{bmatrix}\succeq 0,
  \end{aligned}
\end{equation*}
which has $\bar{x}\doteq (0,0)$ as one of its solutions. The point $\xb$ satisfies Definition~\ref{def:sndg} after removing item 2, with $\bar{E}\doteq \I_2$, because $v_{11}(\xb,\bar{E})=(1,0)$ and $v_{12}(\xb,\bar{E})=(0,1)$ are linearly independent; but $\xb$ does not satisfy the KKT conditions since there is no $\bar{Y}\succeq 0$ such that $\bar{Y}_{11}=0$ and $\bar{Y}_{12}=\bar{Y}_{21}=1/2$. Thus, Definition~\ref{def:sndg} is not a constraint qualification without item 2.
\end{example}

\begin{remark}\label{rem:facialreduction}
Let us show that when item 2 fails, the problem can be reformulated such that it holds.
Let $\xb\in \F$ and $\bar{E}$ be a matrix that spans $\Ker G(\xb)$. If item 2 of Definition~\ref{def:sndg} is not satisfied, then let $J\doteq \{i\in \{1,\ldots,m-r\}\colon (i,i)\not\in \mathcal{I}(\widehat{G}^{\bar{E}},\xb)\}$ and note that there exists some $\varepsilon>0$ such that 
\[
	G(x)\in \sym_+ \textnormal{ if, and only if, } G(x)\in \S^m_+\bigcap_{i\in J} \{\bar{e}_i\bar{e}_i^\T\}^\perp,
\]
for every $x\in B(\xb,\varepsilon)$, where $\bar{e}_i$ denotes the $i$-th column of $\bar{E}$. That is, the feasible set $\F$ coincides locally with the preimage of the face $F\doteq\S^m_+\bigcap_{i\in J} \{\bar{e}_i\bar{e}_i^\T\}^\perp$ of $\sym_+$. Moreover, since $F$ is a face of $\S^m_+$, then there is an orthogonal matrix $V\doteq [V_1,V_2]\in \R^{m\times m}$ such that
\[
	V^\T F V=\left\{
	\begin{bmatrix}
		M & 0\\
		0 & 0
	\end{bmatrix}\colon M\in \S^{m-\omega}_+
	\right\},
\]
where $\omega$ is the cardinality of $J$~\cite[Eq. 2.3]{Pataki}. This means that it is possible to locally replace the original constraint of \eqref{NSDP} by the equality constraint $V_2^\top G(x)=0$ and a smaller semidefinite constraint $\mathcal{G}(x)\doteq V_1^\T G(x) V_1\in \S^{m-\omega}_+$. If $F$ is minimal, then the new constraint $\mathcal{G}(x)\in \S^{m-\omega}_+$ satisfies item 2 of Definition~\ref{def:sndg} at $\xb$. Otherwise, this process can be repeated until the minimal face is reached. Thus, every problem can be equivalently reformulated (reducing dimension if necessary), such that item 2 always holds. In particular, when $G$ is an affine function, then this procedure can be computed via a popular preprocessing technique called \emph{facial reduction} (we refer to Pataki~\cite{Pataki} and references therein for more details about it). When $G(\xb)=0$ and $\bar{E}=\I_m$, this procedure can be done by simply removing the $i$-th row and the $i$-th column of $G$, for every $i$ such that $(i,i)\not\in \mathcal{I}(G,\xb)$, and including the correspondent equality constraints into the problem. We recall that all of our results can be easily extended to NSDP problems with separate equality constraints.

Let us illustrate this procedure using Example~\ref{example:facial}. In this case we have $\bar{e}_2^\T G(x)\bar{e}_2=0$ for every $x$; then $x\in \F$ if, and only if, $G(x)\in F$, where
\[
	F\doteq \S^2_+\bigcap 
	\left\{
		\begin{bmatrix}
			0 & 0 \\
			0 & 1
		\end{bmatrix}			
	\right\}^{\perp}=
	\left\{
		\begin{bmatrix}
			\alpha & 0\\
			0 & 0
		\end{bmatrix}
		\colon \alpha \geqslant 0,
	\right\}
\]
which means that the constraint of the problem can be equivalently written as $x_2=0$ and $x_1\geqslant 0$; for which $\xb$ satisfies Definition~\ref{def:sndg} and the KKT conditions.
\end{remark}

\begin{remark}\label{sdp:sndglicq}
If $G$ is structurally diagonal as in~\eqref{sdp:diagg}, then $\xb$ satisfies \jo{\linebreak} sparse-nondegeneracy if, and only if, the set $\{\nabla g_i(\xb)\colon g_i(\xb)=0\}$ is linearly independent. Moreover, this can be extended to block-diagonal constraints. In this case, assuming the same notation as Remark~\ref{rem:wndgblockdiag}, sparse nondegeneracy holds at a feasible point $\xb$ of \eqref{sdp:blockdiag} if, and only if, for each $\ell\in\{1,\ldots,q\}$ there is some matrix $\bar{E}_\ell$ that spans $\Ker G_\ell(\xb)$, such that:

\begin{itemize}
\item For all $i\in \{1,\ldots,m_\ell-r_\ell\}$, we have $(i,i)\in \mathcal{I}(G^{\bar{E}_\ell}_\ell,\xb)$;

\item The set
\[
	\bigcup_{\ell=1}^q \left\{ v_{ij}^{\ell}(\xb,\bar{E}_\ell) \colon (i,j)\in \mathcal{I}(G^{\bar{E}_\ell}_\ell,\xb) \right\}
\]
is linearly independent, where $v_{ij}^\ell(\xb,\bar{E}_\ell)$ is defined as in \eqref{block:viidef}.
\end{itemize}
Note that this is how sparse-nondegeneracy would be defined for a multifold equivalent representation of \eqref{sdp:blockdiag}, with constraints $G_1(x)\succeq 0, \ldots, G_q(x)\succeq 0$.
\end{remark}

In view of Remark~\ref{sdp:sndglicq}, it is easy to build a diagonal counterexample for the converse of Proposition~\ref{prop:sndgrob}. For instance, take $m=2$ and set $\xb=0$; then, define the constraint
\begin{equation}\label{ex:wrobnotwndg}
	G(x)\doteq
	\begin{bmatrix}
		x & 0\\
		0 & x
	\end{bmatrix},
\end{equation}
and note that $v_{11}(\xb,\bar{E})=v_{22}(\xb,\bar{E})=1$ for every matrix $\bar{E}$ that spans $\Ker G(\xb)$. Hence, sparse-nondegeneracy does not hold, although Robinson's CQ does.

Furthermore, Remark~\ref{sdp:sndglicq} reveals a similarity among sparse-nondegeneracy, Forsgren's CQ, and weak-nondegeneracy, which is the fact they all reduce to LICQ when considering a diagonal matrix constraint. Moreover, it follows directly from Propositions~\ref{prop:ndgvii} and~\ref{sdp:ndg} that nondegeneracy also strictly implies sparse-nondegeneracy. To make a rough comparison between Forsgren's CQ and sparse-nondegeneracy, note that both evaluate linear independence of the set $\left\{v_{ij}(\xb,\bar{E})\colon 1\leqslant i\leqslant j\leqslant m-r\right\}$, but while item 1 of Definition~\ref{def:sndg} takes coefficients structured as in $\mathcal{S}(\widehat{G}^{\bar{E}},\xb)$, condition \eqref{eq:forsgren1} takes coefficients structured as in $\bar{E}^\T\mathcal{S}(\tilde{G},\xb)\bar{E}$. This suggests that they are different conditions. In fact, Example~\ref{ex:wndgnotndg} can also be used to show that neither weak- nor sparse-nondegeneracy imply Forsgren's CQ.
\begin{example}[same as Example~\ref{ex:wndgnotndg}]
	Consider the constraint:
	\begin{equation*}
		G(x)\doteq
		\begin{bmatrix}
			x_1 & x_2\\
			x_2 & x_1
		\end{bmatrix}
	\end{equation*}
	and the point $\xb\doteq (0,0)$, which satisfies weak-nondegeneracy and violates nondegeneracy (Example~\ref{ex:wndgnotndg}). Also:
\begin{itemize}
\item \emph{\textbf{Sparse-nondegeneracy holds at $\xb$:}} take the same $\bar{E}$ as above and we have
\[
		\widehat{G}^{\bar{E}}(x)\doteq
		\begin{bmatrix}
			x_1-x_2 & 0\\
			0 & x_1+x_2
		\end{bmatrix}
\]
and $\mathcal{I}(\widehat{G}^{\bar{E}},\xb)=\{(1,1),(2,2)\}$;
	
\item \emph{\textbf{Forsgren's CQ does not hold at $\xb$:}} in this case Forsgren's CQ is equivalent to nondegeneracy, which does not hold because if $\bar{E}\doteq\I_m$, then $v_{11}(\xb,\bar{E})=v_{22}(\xb,\bar{E})=[1,0]$.
\end{itemize}
Thus, neither weak- nor sparse-nondegeneracy imply Forsgren's CQ.
\end{example}

Moreover, if $G(\xb)=0$ then $\tilde{G}=G$ and in this case Forsgren's CQ implies sparse-nondegeneracy (see Proposition \ref{prop:fcqli} and the discussion afterwards). Whether this still holds or not when $G(\xb)\neq 0$ is an open problem that we are currently unable to address, due to the intricate form of $\tilde{G}$ in the general case.

An elementary consequence of Lemma~\ref{lem:sndg} is that sparse-nondegeneracy guarantees uniqueness of the Lagrange multiplier with respect to a fixed sparsity pattern, which is similar to a result proven for Forsgren's CQ~\cite{Forsgren2000}.

\begin{proposition}\label{prop:unique}
	Let $\xb$ be a KKT point of \eqref{NSDP} that satisfies item 1 of Definition~\ref{def:sndg} and let $\bar{E}$ be the matrix that certifies it, which spans $\Ker G(\xb)$. Then, $\Lambda(\xb)\bigcap \left(\bar{E} \mathcal{S}(\widehat{G}^{\bar{E}},\xb)\bar{E}^\T\right)$ is a singleton.
\end{proposition}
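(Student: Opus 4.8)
The plan is to split the \emph{singleton} claim into the two statements that $\Lambda(\xb)\cap(\bar{E}\,\mathcal{S}(\widehat{G}^{\bar{E}},\xb)\,\bar{E}^\T)$ contains \emph{at most} one point and that it is \emph{nonempty}; the first is the ``elementary consequence of Lemma~\ref{lem:sndg}'' that the surrounding text advertises, and it is where I would begin.

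For uniqueness, suppose $Y_1=\bar{E}\tilde{Y}_1\bar{E}^\T$ and $Y_2=\bar{E}\tilde{Y}_2\bar{E}^\T$ both belong to the intersection, so that $\tilde{Y}_1,\tilde{Y}_2\in\mathcal{S}(\widehat{G}^{\bar{E}},\xb)\cap\S^{m-r}_+$ and both fulfil the stationarity equation, i.e.\ $\nabla f(\xb)=DG(\xb)^*[Y_1]=DG(\xb)^*[Y_2]$. Subtracting yields $DG(\xb)^*[\bar{E}(\tilde{Y}_1-\tilde{Y}_2)\bar{E}^\T]=0$, and since $\mathcal{S}(\widehat{G}^{\bar{E}},\xb)$ is a linear subspace we have $\tilde{Y}_1-\tilde{Y}_2\in\mathcal{S}(\widehat{G}^{\bar{E}},\xb)$. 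By Lemma~\ref{lem:sndg} (item~1 being assumed) this forces $\tilde{Y}_1-\tilde{Y}_2=0$, hence $Y_1=Y_2$. Complementarity never needs to be invoked here, since every matrix of the form $\bar{E}(\cdot)\bar{E}^\T$ is automatically complementary to $G(\xb)$ because $G(\xb)\bar{E}=0$.

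For nonemptiness I would exploit the KKT hypothesis: pick any $\bar{Y}=\bar{E}\tilde{Y}\bar{E}^\T\in\Lambda(\xb)$ with $\tilde{Y}\succeq 0$ and try to manufacture a multiplier supported on the pattern. The natural candidate is the sparsification $\tilde{Y}^s$ obtained by zeroing every entry of $\tilde{Y}$ in a structurally-zero position, exactly as in the proof of Proposition~\ref{prop:sndgrob}. Two of the three defining properties come for free: complementarity again holds automatically, and stationarity is \emph{unchanged}, because for each structurally-zero index $(i,j)$ the function $\widehat{G}^{\bar{E}}_{ij}$ vanishes identically near $\xb$, so $v_{ij}(\xb,\bar{E})=\nabla\widehat{G}^{\bar{E}}_{ij}(\xb)=0$ and the deleted entries contribute nothing to $DG(\xb)^*[\bar{E}\tilde{Y}\bar{E}^\T]=\sum_{i,j}\tilde{Y}_{ij}v_{ij}(\xb,\bar{E})$.

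The hard part will be the one property that is not free, namely showing that $\tilde{Y}^s$ remains positive semidefinite: deleting off-diagonal entries of a positive semidefinite matrix does not preserve semidefiniteness in general. I therefore expect the crux of the argument to be a structural analysis of the pattern $\mathcal{I}(\widehat{G}^{\bar{E}},\xb)$ --- presumably using item~2, which keeps every diagonal position, together with the semidefiniteness of $\widehat{G}^{\bar{E}}(x)=\bar{E}^\T G(x)\bar{E}$ at nearby feasible $x$ --- to argue either that the retained entries already assemble into a semidefinite matrix or that every full multiplier is in fact already supported on the pattern. This positive-semidefiniteness step is the only place the proof can break, so it is where I would concentrate effort and scrutinise the author's reasoning most closely.
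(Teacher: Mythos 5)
Your uniqueness argument is precisely the paper's proof: the authors take $Y_1,Y_2$ in the intersection, write $Y_1-Y_2=\bar{E}Z\bar{E}^\T$ with $Z\in\mathcal{S}(\widehat{G}^{\bar{E}},\xb)$ (a linear subspace), observe that $DG(\xb)^*[\bar{E}Z\bar{E}^\T]=0$, and invoke Lemma~\ref{lem:sndg} to conclude $Z=0$. That is the \emph{entirety} of their proof --- nonemptiness is never addressed --- so what is actually established is that the intersection contains at most one element.

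Your instinct that positive semidefiniteness of the sparsified multiplier is the breaking point is correct, and in fact no repair is possible: the intersection can be empty under the stated hypotheses. Take $n=5$, $\xb=0$, $f(x)=x_1+x_2+x_3+2x_4+2x_5$ and
\[
G(x)\doteq\begin{bmatrix} x_1 & x_4 & 0\\ x_4 & x_2 & x_5\\ 0 & x_5 & x_3\end{bmatrix},
\]
so that $\bar{E}=\I_3$, $\mathcal{I}(\widehat{G}^{\bar{E}},\xb)$ omits only $(1,3)$, and item~1 (indeed all of Definition~\ref{def:sndg}) holds because the gradients $\nabla G_{11}(\xb),\nabla G_{22}(\xb),\nabla G_{33}(\xb),\nabla G_{12}(\xb),\nabla G_{23}(\xb)$ are the five canonical basis vectors of $\R^5$. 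Stationarity forces $Y_{11}=Y_{22}=Y_{33}=Y_{12}=Y_{23}=1$ with $Y_{13}=t$ free, and $\det Y=-(1-t)^2$, so $\Lambda(\xb)$ consists solely of the all-ones matrix, whose $(1,3)$ entry is nonzero; hence $\Lambda(\xb)\cap\bigl(\bar{E}\,\mathcal{S}(\widehat{G}^{\bar{E}},\xb)\,\bar{E}^\T\bigr)=\emptyset$, and the sparsification you propose has determinant $-1$. So your proof is complete relative to what the paper actually proves; the gap you located is a gap in the \emph{statement}, whose conclusion should read ``contains at most one element,'' consistent with the surrounding text, which advertises only uniqueness of the multiplier with respect to a fixed sparsity pattern.
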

\begin{proof}
	Firstly, to see why $\Lambda(\xb)\bigcap \left(\bar{E} \mathcal{S}(\widehat{G}^{\bar{E}},\xb)\bar{E}^\T\right)\neq \emptyset$ we resort to a result of~\cite[Thm. 7]{ahv} which states that under Robinson's CQ any accumulation point $\bar{Y}$ of the sequence 
	\[Y^k \doteq \rho_k \Pi_{\sym_+}(-G(x^k))\]
	must belong to $\Lambda(\xb)$. But clearly, for all $k\in\N$ large enough, we see that $Y^k\in \bar{E}\mathcal{S}(\widehat{G}^{\bar{E}},\xb)\bar{E}^\T$ and so does $\bar{Y}$.
	
	Now let $Y_1,Y_2\in \Lambda(\xb)\bigcap \left(\bar{E} \mathcal{S}(\widehat{G}^{\bar{E}},\xb)\bar{E}^\T\right)$ be Lagrange multipliers associated with $\xb$, define $Y\doteq Y_1-Y_2$, and by definition there exists some $Z\in \mathcal{S}(\widehat{G}^{\bar{E}},\xb)$ such that $Y=\bar{E}Z\bar{E}^\T$ and $DG(\xb)^*[\bar{E}Z\bar{E}^\T]=0$. By Lemma~\ref{lem:sndg} we must have $Z=0$ and, consequently, $Y_1=Y_2$.
		\jo{\hfill\qed}
\end{proof}

Another important property of sparse-nondegeneracy is that the number of structural zeros of $\widehat{G}^{\bar{E}}$, at points that satisfy it, remains the same regardless of $\bar{E}$.
\begin{proposition}\label{prop:sndgsamedim}
Let $\xb\in \F$ be such that $\Ker G(\xb)\neq \{0\}$, and let $\bar{E}$ and $\bar{W}$ be matrices that span $\Ker G(\xb)$, such that item 1 of Definition~\ref{def:sndg} holds. Then, $\#\mathcal{I}(\widehat{G}^{\bar{E}},\xb)=\#\mathcal{I}(\widehat{G}^{\bar{W}},\xb)$.
\end{proposition}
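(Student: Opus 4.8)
The claim relates two different orthonormal bases $\bar{E}$ and $\bar{W}$ of $\Ker G(\bar{x})$, both certifying item 1 of Definition~\ref{def:sndg}. Since both span the same kernel, there is an orthogonal matrix $Q \in \R^{(m-r)\times(m-r)}$ with $\bar{W} = \bar{E}Q$. The plan is to show that the two index sets have the same cardinality by counting dimensions. The key observation is that $\#\mathcal{I}(\widehat{G}^{\bar{E}},\xb)$ equals the dimension of the linear span of the structured coefficient space: under item 1, the map $\tilde{Y}\mapsto DG(\xb)^*[\bar{E}\tilde{Y}\bar{E}^\T]$ restricted to $\mathcal{S}(\widehat{G}^{\bar{E}},\xb)$ is \emph{injective} by Lemma~\ref{lem:sndg}, so the number of free entries $\#\mathcal{I}(\widehat{G}^{\bar{E}},\xb) = \dim \mathcal{S}(\widehat{G}^{\bar{E}},\xb)$ equals the dimension of its image inside $\R^n$.

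**Identifying the common invariant subspace.**

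Let me denote by $V_{\bar{E}}\doteq \Im\left(\psi_{\bar{E}}^*\right)\subseteq\R^n$ the image of the adjoint operator $\tilde{Y}\mapsto DG(\xb)^*[\bar{E}\tilde{Y}\bar{E}^\T]$ acting on $\S^{m-r}$ (the \emph{full} symmetric space, not yet restricted). The crucial point is that $V_{\bar{E}}$ does not depend on the choice of basis of the kernel: for any $\tilde{Y}\in\S^{m-r}$ we have $\bar{W}\tilde{Y}\bar{W}^\T = \bar{E}(Q\tilde{Y}Q^\T)\bar{E}^\T$, and $\tilde{Y}\mapsto Q\tilde{Y}Q^\T$ is a bijection of $\S^{m-r}$; hence $V_{\bar{E}} = V_{\bar{W}}$. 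Call this common subspace $V$, whose dimension is exactly the number of linearly independent vectors among $\{v_{ij}(\xb,\bar{E})\colon 1\leqslant i\leqslant j\leqslant m-r\}$, i.e.\ $\dim V = \rank\{v_{ij}(\xb,\bar{E})\}$, and likewise for $\bar{W}$.

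**Relating the sparse count to the dimension of \texorpdfstring{$V$}{V}.**

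Now I would argue that under item 1, the structured vectors $\{v_{ij}(\xb,\bar{E})\colon (i,j)\in\mathcal{I}(\widehat{G}^{\bar{E}},\xb)\}$ are linearly independent \emph{and} they span all of $V$. Independence is item 1 itself. For the spanning claim, note that whenever $(i,j)\notin\mathcal{I}(\widehat{G}^{\bar{E}},\xb)$, the entry $\widehat{G}^{\bar{E}}_{ij}(x)=\bar{e}_i^\T G(x)\bar{e}_j$ vanishes identically near $\xb$, so its gradient $v_{ij}(\xb,\bar{E})=\nabla\widehat{G}^{\bar{E}}_{ij}(\xb)=0$. Thus every generator $v_{ij}$ of $V$ with $(i,j)\notin\mathcal{I}(\widehat{G}^{\bar{E}},\xb)$ is the zero vector and contributes nothing, so the structured set already spans $V$. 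Combining independence and spanning gives
\[
	\#\mathcal{I}(\widehat{G}^{\bar{E}},\xb) = \dim V = \#\mathcal{I}(\widehat{G}^{\bar{W}},\xb),
\]
where the outer equalities use the identical argument applied to $\bar{W}$, and the middle equality uses the basis-independence of $V$ established above. This closes the proof.

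**Main obstacle.**

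The step I expect to require the most care is verifying that the structurally zero entries of $\widehat{G}^{\bar{E}}$ correspond \emph{exactly} to vanishing gradients, so that the passage from ``generators of $V$'' to ``structured generators of $V$'' loses no dimension. Concretely, one must confirm that $(i,j)\notin\mathcal{I}(\widehat{G}^{\bar{E}},\xb)$ forces $v_{ij}(\xb,\bar{E})=0$ (immediate, since the whole scalar function vanishes near $\xb$), while the converse implication is \emph{not} needed: a zero gradient at $\xb$ with a nonvanishing entry is harmless because such an index, if present, would simply make item 1 fail by introducing a zero vector into a set required to be linearly independent. This asymmetry is exactly why item 1 is invoked, and it is the subtle point that makes the dimension count rigorous rather than merely suggestive.
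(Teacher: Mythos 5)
Your proof is correct and follows essentially the same route as the paper's: both arguments identify the span of all the vectors $v_{ij}(\xb,\cdot)$ as an object independent of the choice of basis of $\Ker G(\xb)$, observe that the structurally zero entries contribute zero gradients so the structured vectors already span it, and then use item 1 of Definition~\ref{def:sndg} to equate each cardinality with the dimension of that common span. The only difference is presentational -- you phrase the basis-independence abstractly as the invariance of $\Im\bigl(\tilde{Y}\mapsto DG(\xb)^*[\bar{E}\tilde{Y}\bar{E}^\T]\bigr)$ under the conjugation bijection $\tilde{Y}\mapsto Q\tilde{Y}Q^\T$, whereas the paper carries out the equivalent computation explicitly via the invertibility of the Kronecker product $Z\otimes Z$.
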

\begin{proof}
Let $Z\doteq [z_{\ell s}]_{\ell,s\in \{1,\ldots,m-r\}}$ be an invertible matrix such that $\bar{E}Z=\bar{W}$ and note that $\widehat{G}^{\bar{W}}(\xb)=Z^\T\widehat{G}^{\bar{E}}(\xb)Z$, so \si{$}\jo{\[}\widehat{G}^{\bar{W}}_{ij}(\xb)=\langle \widehat{G}^{\bar{E}}(\xb),z_i z_j^\T\rangle=\sum_{\ell,s=1}^r z_{\ell i} z_{s j}\widehat{G}^{\bar{E}}_{\ell s}(\xb),\si{$}\jo{\]} where $z_i$ denotes the $i$-th column of $Z$, and
\[
	\nabla \widehat{G}^{\bar{W}}_{ij}(\xb)=\sum_{\ell,s=1}^r z_{\ell i} z_{s j}\nabla \widehat{G}^{\bar{E}}_{\ell s}(\xb).
\]
Rephrasing,
\si{
\[
	\nabla \widehat{G}^{\bar{W}}_{ij}(\xb)=
	\underbrace{\begin{bmatrix}
		\mid & & \mid & \mid & & \mid\\
		\nabla \widehat{G}^{\bar{E}}_{11}(\xb) & \ldots & \nabla \widehat{G}^{\bar{E}}_{m-r,1}(\xb) & \nabla \widehat{G}^{\bar{E}}_{12}(\xb) & \ldots & \nabla \widehat{G}^{\bar{E}}_{m-r,m-r}(\xb)\\
		\mid & & \mid & \mid & & \mid\\
	\end{bmatrix}}_{\doteq \ \textnormal{unfold}(D\widehat{G}^{\bar{E}}(\xb))\colon n\times (m-r)^2}
	\quad \cdot
	\underbrace{\begin{bmatrix}
		 z_{1i} z_{1j}\\
		\vdots\\
		 z_{m-r,i} z_{1j}\\
		 z_{1i} z_{2j}\\
		\vdots\\
		 z_{m-r,i} z_{m-r,j}
	\end{bmatrix}}_{\doteq \ \textnormal{vec}( z_i z_j^\T)\colon (m-r)^2\times 1},
\]
}
\jo{
\[
	\nabla \widehat{G}^{\bar{W}}_{ij}(\xb)=\textnormal{unfold}(D\widehat{G}^{\bar{E}}(\xb)) \textnormal{vec}( z_i z_j^\T)
\]
}
where $\textnormal{unfold}\colon \R^{m-r\times m-r\times n}\to \R^{n\times (m-r)^2}$ is an \textit{unfolding} operator for the tensor $D\widehat{G}^{\bar{E}}(\xb)$ when it is seen as an $m-r\times m-r$ matrix with $n$-dimensional entries. Also, $\textnormal{vec}\colon \R^{m-r\times m-r}\to \R^{(m-r)^2}$ is the usual \textit{vectorization} operator, which transforms a matrix into a vector by stacking up its columns, from left to right. 
\jo{
That is, 
\[
	\textnormal{unfold}(D\widehat{G}^{\bar{E}}(\xb)) \doteq 
	\begin{bmatrix}
		\mid & & \mid & \mid & & \mid\\
		\nabla \widehat{G}^{\bar{E}}_{11}(\xb) & \ldots & \nabla \widehat{G}^{\bar{E}}_{m-r,1}(\xb) & \nabla \widehat{G}^{\bar{E}}_{12}(\xb) & \ldots & \nabla \widehat{G}^{\bar{E}}_{m-r,m-r}(\xb)\\
		\mid & & \mid & \mid & & \mid\\
	\end{bmatrix}
\]
and
\[
	\textnormal{vec}( z_i z_j^\T)\doteq
	\begin{bmatrix}
		 z_{1i} z_{1j}\\
		\vdots\\
		 z_{m-r,i} z_{1j}\\
		 z_{1i} z_{2j}\\
		\vdots\\
		 z_{m-r,i} z_{m-r,j}.
	\end{bmatrix}
\]
}
Consequently,
\si{
\[
	\underbrace{\textnormal{unfold}(D\widehat{G}^{\bar{W}}(\xb))}_{n\times (m-r)^2}=\textnormal{unfold}(D\widehat{G}^{\bar{E}}(\xb))\cdot 		
	\underbrace{\begin{bmatrix}
		\mid & & \mid & \mid & & \mid\\
		\textnormal{vec}( z_1 z_1^\T) & \ldots& \textnormal{vec}( z_1 z_r^\T) & \textnormal{vec}( z_2 z_1^\T) & \ldots& \textnormal{vec}( z_r z_r^\T)\\
		\mid & & \mid & \mid & & \mid\\
	\end{bmatrix}}_{(m-r)^2\times(m-r)^2},
\]
which can be rephrased in terms of the \textit{Kronecker product} as $\textnormal{unfold}(D\widehat{G}^{\bar{W}}(\xb))=\textnormal{unfold}(D\widehat{G}^{\bar{E}}(\xb)) Z\otimes Z$. 
}
\jo{
\[
	\textnormal{unfold}(D\widehat{G}^{\bar{W}}(\xb))=\textnormal{unfold}(D\widehat{G}^{\bar{E}}(\xb)) (Z\otimes Z),
\]
where
\[	
	Z\otimes Z\doteq \begin{bmatrix}
		\mid & & \mid & \mid & & \mid\\
		\textnormal{vec}( z_1 z_1^\T) & \ldots& \textnormal{vec}( z_1 z_r^\T) & \textnormal{vec}( z_2 z_1^\T) & \ldots& \textnormal{vec}( z_r z_r^\T)\\
		\mid & & \mid & \mid & & \mid\\
	\end{bmatrix},
\]
is the \textit{Kronecker product} of $Z$ with itself.
}
But since $Z$ is invertible, $Z\otimes Z$ is also invertible, which means that
\begin{equation*}
	\begin{aligned}
	\textnormal{span}\left(\left\{\nabla \widehat{G}^{\bar{W}}_{ij}(\xb)\colon 
	\si{1\leqslant} i\leqslant j\si{\leqslant m-r}
	\right\}
	\right) & = \textnormal{span}\left(\left\{\nabla \widehat{G}^{\bar{E}}_{ij}(\xb)\colon 
	\si{1\leqslant} i\leqslant j\si{\leqslant m-r}
	\right\}\right).\\
	\end{aligned}
\end{equation*}
Then, since $\nabla \widehat{G}^{\bar{E}}_{ij}(\xb)=0$ for all $(i,j)\in \mathcal{I}(\widehat{G}^{\bar{E}},\xb)$ (and the same holds for $\bar{W}$), it follows that 
\begin{equation*}
	\begin{aligned}
	\textnormal{span}\left(\left\{\nabla \widehat{G}^{\bar{W}}_{ij}(\xb)\colon (i,j)\in \mathcal{I}(\widehat{G}^{\bar{W}},\xb)\right\}\right) & = \textnormal{span}\left(\left\{\nabla \widehat{G}^{\bar{E}}_{ij}(\xb)\colon (i,j)\in \mathcal{I}(\widehat{G}^{\bar{E}},\xb)\right\}\right)\si{,}.
	\end{aligned}
\end{equation*}
Finally, since item 1 of Definition~\ref{def:sndg} holds for both $\bar{E}$ and $\bar{W}$, we conclude that $\#\mathcal{I}(\widehat{G}^{\bar{E}},\xb)=\#\mathcal{I}(\widehat{G}^{\bar{W}},\xb)$.
	\jo{\hfill\qed}
\end{proof}

Proposition~\ref{prop:sndgsamedim} tells us that the strength of sparse-nondegeneracy is invariant with respect to $\bar{E}$. That is, if there are multiple matrices $\bar{E}$ certifying sparse-nondegeneracy at a point $\xb$, then they all induce similar conditions. In our opinion, this is an advantage with respect to Forsgreen's CQ. As for weak-nondegeneracy, we were not able to find any counterexample nor prove any relation between them. In fact, finding this relation seems a challenging task since there is no clear relation between the eigenvectors of $G(x)$ and its sparsity structure, in general.

One should also keep in mind that if sparse-nondegeneracy holds at some $\xb$, then it also holds in a neighborhood of $\xb$.

\begin{theorem}\label{thm:stabsparse}
Let $\xb\in \F$ satisfy sparse-nondegeneracy. Then, there exists a neighborhood $\mathcal{V}$ of $\xb$ such that every $x\in\mathcal{V}$ satisfies sparse-nondegeneracy. 
\end{theorem}

\begin{proof}
Suppose that the statement above is false. That is, suppose that there exists a feasible sequence $\seq{x}\to \xb$ such that sparse-nondegeneracy fails at each $x^k$, but it holds at $\xb$. Our aim is to prove that this leads to an absurd. So let $\bar{E}$ be any matrix with orthonormal columns that span $\Ker G(\xb)$ and, for each $k\in \N$ let $\Pi^k$ be the projection matrix onto the space spanned by the $m-r$ smallest eigenvectors of $G(x^k)$, which is well defined when $k$ is sufficiently large. Define $\tilde{W}^k\doteq \Pi^k \bar{E}$, for all such $k\in \N$. It is well-known (see, for instance,~\cite[Ex. 3.98]{bshapiro}) that the columns of $\tilde{W}^k$ are linearly independent, which allows us to apply the Gram-Schmidt orthonormalization process to them and arrange its output in the columns of a new matrix, which we will denote by $W^k$. It is also known that $W^k\to \bar{E}$ as $k\to \infty$.

Because sparse-nondegeneracy fails at $x^k$, we know that the rank $r^k$ of $G(x^k)$ is smaller than $m$, and by the pigeonhole principle we can even assume that $r^k$ is the same, say $\tilde{r}$, for every $k\in \N$. Also, note that $m-\tilde{r}\leqslant m-r$ and that, by construction, we can assume that the first $m-\tilde{r}$ columns of each $W^k$, which we will arrange in a matrix denoted by $E^k$, span $\Ker G(x^k)$. Since sparse-nondegeneracy fails at $x^k$ it holds that $\{v_{ij}(x^k,E^k)\}_{(i,j)\in \mathcal{I}(\widehat{G}^{E^k},x^k)}$ linearly dependent for each $k\in \N$. Observe that since $\lim_{k\to\infty} E^k$ is a submatrix of $\bar{E}$ we have that 
\[
	\lim_{k\to \infty}\mathcal{I}(\widehat{G}^{E^k},x^k)\subseteq \mathcal{I}(\widehat{G}^{\bar{E}},\xb)
\]
and
\[
	\lim_{k\to \infty} \{v_{ij}(x^k,E^k)\}_{(i,j)\in \mathcal{I}(\widehat{G}^{E^k},x^k)}\subseteq \{v_{ij}(\xb,\bar{E})\}_{(i,j)\in \mathcal{I}(\widehat{G}^{\bar{E}},\xb)}.
\]
The left-hand side of the expression above is linearly dependent, which makes $\{v_{ij}(\xb,\bar{E})\}_{(i,j)\in \mathcal{I}(\widehat{G}^{\bar{E}},\xb)}$ linearly dependent as well. Because $\bar{E}$ is arbitrary, it follows that sparse-nondegeneracy fails at $\xb$, which is a contradiction.
	\jo{\hfill\qed}
\end{proof}

\begin{remark}
It is noteworthy that it is also possible to define another variant of Robinson's CQ that enjoys sparsity, by replacing $\tilde{Y}\in \mathcal{S}(\widehat{G}^{\bar{E}},\xb)$ by $\tilde{Y}\in \mathcal{S}(\widehat{G}^{\bar{E}},\xb)\cap \S^{m-r}_+$ in Lemma~\ref{lem:sndg}. This definition is strictly implied by sparse-nondegeneracy (see the example given in \eqref{ex:wrobnotwndg}). Moreover, it is clear that this variant of Robinson's CQ is implied by Robinson's CQ, but the converse is also an open question. The proof that this is a CQ follows similarly to the proof of Theorem~\ref{thm:wndgcq}.
\end{remark}

\begin{remark}
Regarding second-order optimality conditions, we call the reader's attention to the fact that for each $\xb\in\ F$ and each $\bar{E}$ that spans $\Ker G(\xb)$, there exists a neighborhood $\mathcal{V}$ of $\xb$ such that 
\[
	G(\mathcal{V})=G(\mathcal{V})\cap \underbrace{\bar{E}\mathcal{S}(\widehat{G}^{\bar{E}},\xb)\bar{E}^\T}_{\doteq H}
\]
This means that near $\xb$ we can consider a new space $\S^H\doteq \sym\cap H$, define a new cone $\S^H_+\doteq \sym_+\cap H$ which is still closed and convex, and in this setting sparse-nondegeneracy induces a second-order optimality condition, which is inherited from~\cite[Thm. 3.45]{bshapiro}. Namely, for every $d\in DG(\xb)^{-1}(T_{\sym_+\cap H}(G(\xb))\cap \{\nabla f(\xb)\}^\perp$ it holds that
\begin{equation}\label{eq:basicsoc}
\sup_{Y\in \Lambda(\xb)\cap H} \left(d^\T\nabla^2 L(\xb,Y)d - \sigma(Y,T^{2}_{\sym_+\cap H}(G(\xb),DG(\xb)[d]))\right)\geqslant 0,
\end{equation}
because sparse-nondegeneracy implies Robinson's CQ, which in turn is carried over to the reduced problem, but since $\Lambda(\xb)\cap H$ is a singleton, we have for $\bar{Y}\in \Lambda(\xb)\cap H$ that 
\[	
d^\T\nabla^2 L(\xb,\bar{Y})d - \sigma(\bar{Y},T^{2}_{\sym_+\cap H}(G(\xb),DG(\xb)[d]))\geqslant 0.
\]
Although this condition concerns the reduced problem, mostly, it can also bring some information about the original problem, for an inequality analogous to~\eqref{eq:basicsoc} in terms of $\sup_{Y\in \Lambda(\xb)}$ is also true. Above, \si{\linebreak}$T^{2}_{\sym_+\cap H}(G(\xb),DG(\xb)[d])$ denotes the \textit{second-order tangent set} to $\sym_+\cap H$ at $G(\xb)$ along $DG(\xb)[d]$ (see~\cite[Def. 3.28]{bshapiro}), and $\sigma(Y,T^{2}_{\sym_+\cap H}(G(\xb),DG(\xb)[d]))$ denotes its support function.
\end{remark}

\subsection{Zeros of the gradients and sparse-nondegeneracy}

In this short ending section, we discuss how to improve sparse-nondegeneracy even further. This is mainly motivated by the realization that the idea of disregarding ``structural zeros'' in the study of regularity is actually too conservative. Since nondegeneracy is mainly concerned with the derivative of $G$ at $\xb$ instead of the value of $G$ in a neighborhood of $\xb$, we can in fact ignore all entries of $G$ whose gradients are zero at $\xb$, which is done by considering the following sets:
\[
	\begin{array}{ll}
		\mathcal{S}_\nabla(F,\xb)&\doteq\left\{M\in \S^\beta\colon M_{ij}=0 \text{ if } \nabla F_{ij}(\xb)=0\right\}
	\end{array}
\]
and
\[
	\mathcal{I}_\nabla (F,\xb)\doteq
	\left\{(i,j) \colon \nabla F_{ij}(\xb)\neq 0, \ 1\leqslant i\leqslant j\leqslant \beta\right\}.
\]
For example, if $n=1$ and $\beta=3$, for all $x$ close to $\xb\doteq 0$ we have, as an example,
\begin{equation}\label{sparse:forsgren}
	\textnormal{if } F(x)\doteq
	\begin{bmatrix}
		x & 0 & x^2\\
		0 & x & 1\\
		x^2 & 1 & x
	\end{bmatrix}
	\textnormal{ then } 
	M\in \mathcal{S}(F,\xb)
	\Leftrightarrow
	M=
	\begin{bmatrix}
		M_{11} & 0 & 0\\
		0 & M_{22} & 0\\
		0 & 0 & M_{33}
	\end{bmatrix},
\end{equation}
where $M_{11}, M_{22},$ and $M_{33}$ may or may not be zero. Then, we can define a condition similarly to Definition~\ref{def:sndg} but in terms of $\mathcal{I}_\nabla$:

\begin{definition}[GS-nondegeneracy]\label{def:ewsndg}
We say that the condition \emph{gradient-sparse-nondegeneracy} \si{\linebreak} (GS-nondegeneracy) holds at $\xb\in \F$ if either $\Ker G(\xb)=\{0\}$ or there exists a matrix $\bar{E}\in\R^{m\times m-r}$ that spans $\Ker G(\xb)$ such that:
\begin{enumerate}
	\item The set $\left\{v_{ij}(\xb,\bar{E})\colon (i,j)\in \mathcal{I}_\nabla(\widehat{G}^{\bar{E}},\xb), 1\leqslant i\leqslant j\leqslant m-r\right\}$ is linearly independent;
	\item $(i,i)\in \mathcal{I}_\nabla(\widehat{G}^{\bar{E}},\xb)$ for all $i\in \{1,\ldots,m-r\}$.
\end{enumerate}
\end{definition}

 The interesting properties of GS-nondegeneracy that make it worth an extended comment are twofold. The first one is that sparse-nondegeneracy is strictly stronger than GS-nondegeneracy. Noticing that $\mathcal{I}_\nabla(\widehat{G}^{\bar{E}},\xb)\subseteq \mathcal{I}(\widehat{G}^{\bar{E}},\xb)$ is enough to see the implication and the next example shows that the converse is not necessarily true.

\begin{example}\label{ex:nino}
Let 
\[
	G(x)\doteq \begin{bmatrix}
		x_1 & x_2^2\\
		x_2^2 & x_2
	\end{bmatrix}
\]
and consider the constraint $G(x)\succeq 0$ at the point $\xb\doteq (0,0)$. In this case, Forsgren's CQ fails at $\xb$ with $\bar{E}\doteq \I_2$ because
\[
\textnormal{span}\left\{ \begin{bmatrix}
1 & 0\\ 0 & 0
\end{bmatrix},\begin{bmatrix}
0 & 0\\ 0 & 1
\end{bmatrix} \right\}\neq \mathcal{S}(\tilde{G},\xb)=\mathbb{S}^2.
\]
In fact, \eqref{eq:forsgren1} fails for every orthogonal matrix $\bar{E}$. Furthermore, regardless of $\bar{E}$ the vectors $v_{11}(\xb,\bar{E}),v_{22}(\xb,\bar{E})$, and $v_{12}(\xb,\bar{E})\in \R^2$, are linearly dependent and $\mathcal{I}(\widehat{G}^{\bar{E}},\xb)=\{(1,1),(1,2),(2,2)\}$, hence sparse-nondegeneracy also fails to hold at $\xb$. On the other hand, note that for $\bar{E}=\I_2$, we obtain $\mathcal{I}_{\nabla}(\widehat{G}^{\bar{E}},\xb)=\{(1,1),(2,2)\}$ and 
\[
	v_{11}(\xb,\bar{E})=
	\begin{bmatrix}
		1 \\
		0	
	\end{bmatrix} \quad \textnormal{and} \quad v_{22}(\xb,\bar{E}) =\begin{bmatrix}
		0 \\
		1	
	\end{bmatrix} 
\] are linearly independent, so GS-nondegeneracy holds at $\xb$.
\end{example}

We remark that Lemma \ref{lem:sndg} and Propositions~\ref{prop:sndgrob}, \ref{prop:unique}, and  \ref{prop:sndgsamedim}, can be also stated and proved in terms of GS-nondegeneracy. Moreover, if Forsgren's CQ was defined in terms of $\mathcal{I}_\nabla(\tilde{G},\xb)$ instead of $\mathcal{I}(\tilde{G},\xb)$, we would obtain precisely Definition \ref{def:ewsndg} (due to~\cite[Lem. 2]{Forsgren2000}), which is quite unexpected. The second interesting aspect of GS-nondegeneracy is that, although an analogue of Theorem~\ref{thm:stabsparse} may not be true, it presents at least a different notion of stability, in the sense of ignoring small perturbations. Formally:

\begin{theorem}\label{thm:stabews}
Let $\xb\in \F$ and $\delta\colon \R^n\to \mathbb{S}^m$ be any continuously differentiable function such that $\delta(\xb)=0$ and $D \delta(\xb)=0$. Then, GS-nondegeneracy holds at $\xb$ for the constraint $G(x)\succeq 0$ if, and only if, it holds for the constraint $G_{\delta}(x)\doteq G(x)+\delta(x)\succeq 0$ at the same point.
\end{theorem}

\begin{proof}
Direct from the fact $DG(\xb) = DG_\delta(\xb)$ and $\mathcal{I}_{\nabla} (\widehat{G}^{\bar{E}},\xb)=\mathcal{I}_{\nabla} (\widehat{G}_\delta^{\bar{E}},\xb)$.
\end{proof}

Despite the apparent triviality of Theorem \ref{thm:stabews}, observe that it is essentially telling us that any noise of order two can be disregarded, as we could observe in Example~\ref{ex:nino}.

\section{Conclusions}\label{sec:conc}

In this paper, we studied the nondegeneracy condition of Shapiro and Fan~\cite{shapfan} with the purpose of incorporating some matrix structure into it, such as spectral decompositions and structural sparsity. Our work was motivated by a well-known limitation of nondegeneracy, which is the fact it generally fails in the presence of structural sparsity in the constraint function. For example, we recall that a NSDP problem with multiple constraints may be equivalently reformulated as a single block diagonal constraint, but nondegeneracy is not expected to be preserved in the process. This limitation may have important consequences in practice, since many algorithms are theoretically supported by nondegeneracy and, on the other hand, structural sparsity is a very common trait of optimization models of real world problems.


To address this issue, we proposed three variants of nondegeneracy, here called weak-nondegeneracy, sparse-nondegeneracy, and GS-nondegeneracy. They were proven to be strictly weaker than the classical nondegeneracy. In particular, all new constraint qualifications only require the dimension constraint $n\geqslant m-r$, which is considerably less demanding than the constraint $n\geqslant (m-r)(m-r+1)/2$ imposed by nondegeneracy. Also, they are invariant to multifold or block diagonal formulations of \eqref{NSDP} and, consequently, they recover the LICQ condition from NLP when the constraint function is structurally diagonal.

All our conditions are inspired by {\it sequential optimality conditions} \cite{ahm10,ahv} which provide simple proofs for the facts that the conditions we define are CQs (the proof for sparse-nondegeneracy and GS-nondegeneracy were not presented but they are left for the reader). Besides the simplicity of the approach, the convergence of an external penalty method to KKT points under these CQs is obtained automatically (see the discussion after Theorem~\ref{thm:wndgcq}), which is a direct application of the new CQs. Also, several other CQs for NLP have been recently (re)invented with sequential optimality conditions in mind. In particular, the so-called \textit{constant rank constraint qualification} (CRCQ) by Janin~\cite{janin}, and the \textit{constant positive linear dependence} (CPLD) of Qi and Wei~\cite{qiwei}, together with their weaker counterparts \cite{cpg,rcpld,rcrcq}. Previous attempts have been made to extend these CQs to the conic context, but they have turned out to be flawed \cite{errata} or incomplete \cite{crcq-naive}, since the results in \cite{crcq-naive} are only relevant for multifold conic problems where at least one block of constraints is such that the zero eigenvalue is simple. The approach we present in this paper gives the proper tools for providing the extension of all mentioned CQs to the context of general NSDPs and, more generally, to optimization over symmetric cones, also extending the global convergence results to more practical algorithms. For instance, in NLP, it is known that the convergence theory of a safeguarded augmented Lagrangian method can be built around CPLD~\cite{abms}, which will also be the case for its NSDP variant \cite{ahv}. A continuation of this paper will appear shortly with these results.

With this in mind, we believe that the concepts introduced in this paper are interesting enough to shed a new light to the classical theme of constraint nondegeneracy for conic programming, showing, in particular, how to redefine it in such a way that linear independence can be replaced by weaker notions. In this process, new and interesting challenging open questions have appeared which we believe should be addressed. In particular, new studies should be conducted to clarify the relationship between weak-nondegeneracy and sparse-nondegeneracy, together with the relationship between weak-Robinson's CQ and Robinson's CQ (see Figure~\ref{fig:relationsCQ}).

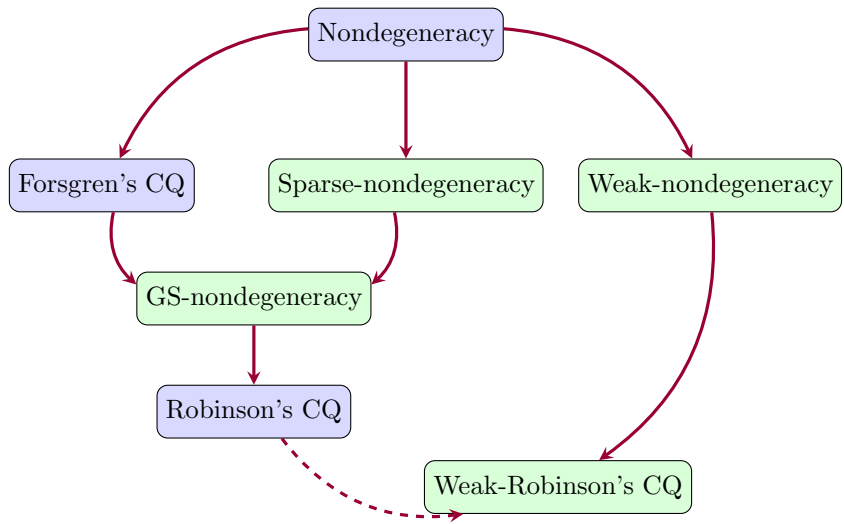
\begin{figure}[!htb]
	\centering
	\begin{tikzpicture}
		\tikzset{
		    old/.style={rectangle,rounded corners, minimum size=0.7cm,fill=blue!15,draw=black},
		    new/.style={rectangle,rounded corners, minimum size=0.7cm,fill=green!15,draw=black},
		    v/.style={-stealth,very thick,purple!80!black},
		    u/.style={-stealth,dashed,very thick,purple!80!black},
		}
		\node[old] (NDG) at (0,0) {Nondegeneracy};
		\node[old] (RCQ) at (-2,-5) {Robinson's CQ};
		\node[new] (EWS) at (-2,-3.5) {GS-nondegeneracy};
		\node[old] (FCQ) at (-4,-2) {Forsgren's CQ};
		\node[new] (WNDG) at (4,-2) {Weak-nondegeneracy};
		\node[new] (SNDG) at (0,-2) {Sparse-nondegeneracy};
		\node[new] (WRCQ) at (2,-6) {Weak-Robinson's CQ};
		%
		\draw[v] (NDG) to[bend left] (WNDG);
		\draw[v] (NDG) to[bend right] (FCQ);
		\draw[v] (NDG) -- (SNDG);	
		\draw[v] (SNDG) to[bend left] (EWS);
		%
		\draw[v] (FCQ) to[bend right] (EWS);
		\draw[u] (RCQ) to[bend right] (WRCQ);
		\draw[v] (EWS) -- (RCQ);
		%
		\draw[v] (WNDG) to[bend left] (WRCQ);
	\end{tikzpicture}
	\captionsetup{justification=raggedright,margin=3cm}
	\caption{Relationship among some CQs for NSDP. Classical CQs are in blue boxes, while new CQs are in green boxes. Arrows indicate strict implications, except for the dashed arrow where the reverse implication is unknown.}
	\label{fig:relationsCQ}
\end{figure}

\bibliographystyle{plain}

\end{document}